\def\BIBand{and}%
\theoremstyle{plain}
\newtheorem{theorem}{Theorem}[section]
\newtheorem{lemma}[theorem]{Lemma}
\theoremstyle{definition}
\newtheorem{definition}{Definition}[section]
\theoremstyle{remark}
\newtheorem{remark}{Remark}
\begin{document}

\begin{frontmatter}

\title{A unified algorithm framework for mean-variance optimization in discounted Markov decision processes} %\tnoteref{mytitlenote}
%\tnotetext[mytitlenote]{Fully documented templates are available in the elsarticle package on \href{http://www.ctan.org/tex-archive/macros/latex/contrib/elsarticle}{CTAN}.}

%\author[address1]{Shuai Ma} %\fnref{myfootnote}
%\ead{mash35@mail.sysu.edu.cn}
%\author[address2]{Xiaoteng Ma}
%\ead{support@elsevier.com}
%\author[address1]{Li Xia\corref{mycorrespondingauthor}}
%\ead{support@elsevier.com}
%\cortext[mycorrespondingauthor]{Corresponding author}
%\address[address1]{School of Business, Sun Yat-sen University, Guangzhou, 510275, China}
%\address[address2]{Department of Automation, Tsinghua University, Beijing, 100086, China}
%%\fntext[myfootnote]{Since 1880.}

\author[address1]{Shuai Ma} %\fnref{myfootnote}  \corref{equalcontri}

\ead{mash35@mail.sysu.edu.cn}

\author[address2]{Xiaoteng Ma} %\corref{equalcontri}

\ead{ma-xt17@mails.tsinghua.edu.cn}

\author[address1]{Li Xia\corref{mycorrespondingauthor}}
%\ead{xiali5@sysu.edu.cn}

\address[address1]{School of Business, Sun Yat-sen University, Guangzhou, 510275, P. R. China}
\address[address2]{Department of Automation, Tsinghua University, Beijing, 100086, P. R. China}

%% or include affiliations in footnotes:
%\author[mymainaddress,mysecondaryaddress]{Elsevier Inc}
%\ead[url]{www.elsevier.com}

%\cortext[equalcontri]{Equal contribution}

\cortext[mycorrespondingauthor]{Corresponding author. Email: \url{xiali5@sysu.edu.cn}}

%%% Group authors per affiliation:
%\author{Elsevier\fnref{myfootnote}}
%\address{Radarweg 29, Amsterdam}
%\fntext[myfootnote]{Since 1880.}
%
%%% or include affiliations in footnotes:
%\author[mymainaddress,mysecondaryaddress]{Elsevier Inc}
%\ead[url]{www.elsevier.com}
%
%\author[mysecondaryaddress]{Global Customer Service\corref{mycorrespondingauthor}}
%\cortext[mycorrespondingauthor]{Corresponding author}
%\ead{support@elsevier.com}
%
%\address[mymainaddress]{1600 John F Kennedy Boulevard, Philadelphia}
%\address[mysecondaryaddress]{360 Park Avenue South, New York}

\begin{abstract}
This paper studies the risk-averse mean-variance optimization in
infinite-horizon discounted Markov decision processes (MDPs). The
involved variance metric concerns reward variability during the
whole process, and future deviations are discounted to their present
values. This discounted mean-variance optimization yields a reward
function dependent on a discounted mean, and this dependency renders
traditional dynamic programming methods inapplicable since it
suppresses a crucial property---time consistency. To deal with this
unorthodox problem, we introduce a pseudo mean to transform the
untreatable MDP to a standard one with a redefined reward function
in standard form and derive a discounted mean-variance performance
difference formula. With the pseudo mean, we propose a unified
algorithm framework with a bilevel optimization structure for the
discounted mean-variance optimization. The framework unifies a
variety of algorithms for several variance-related problems
including, but not limited to, risk-averse variance and
mean-variance optimizations in discounted and average MDPs.
Furthermore, the convergence analyses missing from the literature
can be complemented with the proposed framework as well. Taking the
value iteration as an example, we develop a discounted mean-variance
value iteration algorithm and prove its convergence to a local
optimum with the aid of a Bellman local-optimality equation.
Finally, we conduct a numerical experiment on portfolio management
to validate the proposed algorithm.
\end{abstract}

\begin{keyword}
Dynamic programming, Markov decision process, discounted mean-variance, bilevel optimization, Bellman local-optimality equation
%\MSC[2010] 00-01\sep  99-00
\end{keyword}

\end{frontmatter}

%\linenumbers

\section{Introduction}

Financial optimizations usually involve trade-offs between profit
and risk, and variance is to risk what mean is to profit. It could
be the reason why the mean-variance optimization theory initiated by
\cite{Markowitz1952} is one of the most prevalent financial
optimization frameworks.
As a cornerstone of the modern portfolio theory, the mean-variance
optimization theory has been extensively applied in a variety of
financial problems, such as portfolio selection~\citep{Best91},
hedging~\citep{Kouvelis18}, pricing~\citep{Kandel89}, etc. It
appeals to both academia and industry not only for its simplicity
but also for being a satisfactory proxy for other types of risk
minimization rules. \cite{Levy03} show that when diversification
between assets is allowed, the mean-variance optimization theory and
the prospect theory~\citep{Tversky92} almost coincide, which
justifies its robustness. One stream of works on the mean-variance
optimization is from the perspective of stochastic
control~\citep{li2000,basak2010,zhang2012}. In this paper, we study
it from the perspective of Markov decision processes (MDPs), where
it is often assumed that the state and action spaces are finite and
the reward is bounded in a discrete-time scenario.

In the framework of MDPs, a variety of works~\citep{sobel1982variance,Tamar2012policy,Xie2018} %on the mean-variance optimization
concern the variance of total discounted reward, or \textit{return}, i.e., $ \mathbb{V}\{\sum_{t=1}^{\infty} \alpha^{t-1} r(X_t)\}  $, given $ \alpha $ the discount factor and $ r(X_t) $ the immediate reward at time epoch $ t $. %sobel1982variance
%The majority of variance-related works take the mean dependency into consideration.
%In MDPs, variance can be generally defined in two ways. %in terms of the targeted random variables:
Its counterpart in average MDPs is termed the limiting average variance, which is defined by $ \lim_{T \rightarrow \infty} \frac{1}{T} \mathbb{E}\{ (\sum_{t=1}^{T} r(X_t) - \mathbb{E} \{\sum_{t=1}^{T}r(X_t) \})^2 \} $~\citep{Lerma1999}.
These two variance metrics focus on the fluctuation of cumulative reward at the final epoch.
However, it is seemingly preferable to consider risks at every time point in many practical problems.
For example, an autonomous driving system should pay
attention to every detail along the road to ensure safe driving. In
finance, it is easy to manipulate a stock price at a specified time
point, but it is barely possible to do it during the whole process.

Motivated by the above observations, we consider a steady-state
variance metric in the mean-variance optimization. The steady-state
variance is also known as the long-run variance~\citep{Filar89},
which is defined by $ \lim_{T \rightarrow \infty}  \frac{1}{T} \\
\mathbb{E} \{ \sum_{t=1}^{T} (r(X_t) - \eta_a)^2 \} $. It quantifies
the dispersion of immediate rewards from the long-run average $
\eta_a = \lim_{T \rightarrow \infty} \frac{1}{T} \mathbb{E}\{
\sum_{t=1}^{T} r(X_t) \} $ by \textit{averaging} the reward
deviations during the whole process. The difference between the
limiting average variance and the steady-state variance is discussed
by \cite{XIA2016}. Different from the classic definition, we
introduce a discount factor in the steady-state variance, and
optimize the discounted mean-variance objective $ (\eta - \beta
\zeta) $, where $ \eta $ is the normalized discounted mean, $ \beta
$ is a risk-aversion parameter, and $ \zeta = \mathbb{E}\{
\sum_{t=1}^{\infty} \alpha^{t-1} (r(X_t) - \eta)^2 \} $ is the
\emph{discounted steady-state variance}. The motivation of involving
the discount factor in the variance metric is twofold. First, we
calculate the \textit{present} risk (deviation) value from a
monetary point of view, which renders risks at multiple epochs
comparable with a consistent measure. Second, a discounted variance
puts more emphasis (weights $\alpha^{t-1}$) on the transient
behavior at the beginning of the process, which may account for a
property of the risk: future risks are less critical than current one.
%   \item By involving the discount factor, we unify the variance-related problem formulations in both discounted and average MDPs with the continuity of the discounted (mean-)variance at $ \alpha = 1 $, and this family of objectives can be solved by algorithms developed in a unified algorithm framework. %A discount factor implies that the ``infinite-horizon'' process terminates at a geometric-distributed epoch.
%\end{enumerate}
Moreover, it is more tractable both
computationally and analytically with a discount factor from a mathematical viewpoint.

For the steady-state variance, \cite{Filar89} illustrate the reasonability of the steady-state variance in average MDPs with a simple example. % long-run variance
They point out that when variance is concerned, the discounted
problem is more difficult to analyze than its average counterpart.
Furthermore, they model the two problems as convex quadratic
programs and prove the existence of deterministic optimal policies.
\cite{Sobel94} and \cite{Chung94} independently analyze the variance
optimization problem with a mean performance constraint in unichain
MDPs. With the aid of the extant theory on quasiconcave
minimization, the problem can be transformed to a linear program,
and the relevant properties and Pareto optimality are studied.
\cite{Prashanth2013NIPS} propose actor-critic algorithms to estimate
policy gradients of the return variance in discounted MDPs and
the steady-state variance in average MDPs. With the ordinary
differential equation approach, they prove the asymptotic local
convergences of the algorithms. \cite{Gosavi2014} proposes a
model-free algorithm analogous to Q-learning for the mean-variance
problem in average reinforcement learning (RL). The algorithm is
validated with a numerical experiment, but the convergence analysis is missing. This gap is filled
by~\cite{XIA2016}, who proposes a policy iteration for variance
minimization in average MDPs, regardless of the mean performance.
With the aid of the sensitivity-based optimization
theory~\citep{Cao2007}, \citeauthor{XIA2016} derives a variance
performance difference formula (PDF), which quantifies the variance
difference between MDPs under any two policies. With the PDF, the local
convergence of the proposed policy iteration is proved. This work is
later extended to the mean-variance optimization in average
MDPs~\citep{Xia2020}. \cite{Bisi2020ijcai} study the discounted
mean-variance in RL, where the steady-state variance is evaluated to
bound the limiting average variance. They develop a gradient-based
trust region policy optimization (originally proposed by
\cite{schulman15}) algorithm with a monotonic policy improvement.
\cite{Zhang2021} focus on the mean-variance optimization in both
discounted and average MDPs, and develop a policy iteration
algorithm and a gradient-based RL algorithm. To deal with the
policy-dependent reward function, they reformulate variance with its
Legendre-Fenchel dual with an extra variable introduced. Virtually,
this variance reformulation is similar to the ones with pseudo
variances defined by \cite{XIA2016,Xia2020}, where more detailed
analyses are given on variance and mean-variance optimization
problem equivalences and local convergences in average MDPs,
respectively.

Two problems emerge from the literature review.
One is that the mean-variance optimization has been studied in discounted and average MDPs separately, and the relationship between the two cases has not been revealed. %with
The other problem is that various optimization algorithms
are proposed without convergence analyses, which is partially
because the variance-related criteria do not fit in with a standard
MDP model. Variance is a quadratic function of mean, and a
variance-related problem is equivalent to an MDP with a special
reward function, whose value for each state depends on policy
instead of action, i.e., the variance value function at a current state
will be affected by actions chosen at not only the current epoch but
also future epochs. This dependency deprives the time-consistency
property and revokes traditional dynamic programming (DP)
methods~\citep{Puterman1994a,Eckstein16,Bisi2020ijcai}. In other words, the
Bellman optimality equation does not optimize over the admissible
action set for a state $ x $ ($ \max_{a \in A(x)} $), but over the
policy space for the whole state space $ S $ ($ \max_{d \in D} $),
and we can not divide and conquer this problem in a traditional
manner. Most of the relevant studies resort to program-based or
gradient-based methods, but the first type of methods cannot deal
with problems with large state and action spaces, and the second usually
suffers from intrinsic deficiencies: slow convergence, large
variance of gradient estimates, and sensitivity to step
sizes~\citep{ZHAO2012}. \cite{XIA2016,Xia2020} proposes policy
iterations for the risk-averse variance and mean-variance
optimizations in average MDPs, which offer a new perspective for
variance-related problems.

In this paper, we first unify the two mean-variance optimization
problems with the continuity property of the discounted
mean-variance metric, and we show that the average problem
formulation can be viewed as a special case when $ \alpha \uparrow 1
$. For details see Remark~\ref{remark1}. This formulation
unification offers a systematic perspective in contrast to the
previous works. To deal with the difficulty caused by policy
dependency, we analyze it in the theory of sensitivity-based
optimization~\citep{Cao2007}, which stems from the perturbation
analysis theory~\citep{Ho2012} and has been largely extended to
stochastic dynamic systems including Markov models. %The core idea of
%the sensitivity-based optimization theory is to search for the
%optimal policy with the performance difference and derivative
%information step-by-step.
This theory is applicable to general Markov systems, even including
the cases without the time-consistency property. With the
sensitivity-based optimization theory, we derive a discounted
mean-variance PDF, based on which we propose a unified algorithm
framework with a bilevel optimization structure, where the inner
problem concerns a standard MDP with a fixed pseudo mean, and the
outer problem refers to a one-dimensional optimization of the pseudo
mean. Different algorithms can be developed with convergence
analyses, which are crucial for the risk-averse variance-related
problems. Taking the value iteration as an example, we propose a
discounted mean-variance value iteration (DMVVI) algorithm and prove
its local convergence. In addition, we present a Bellman
local-optimality equation, which presents a necessary and sufficient
condition for local optimality of a policy. Finally, we apply the
DMVVI algorithm to a portfolio management problem and illustrate its
validity.

The contributions of our paper are twofold.
%First, we formulate a risk-averse mean-variance optimization problem with a discount factor.
%By involving a discount factor in the steady-state variance metric, the cumulative deviations can be measured with a present value.
First, we present a unified algorithm framework for the risk-averse
(mean-)variance optimization problem in discounted and average MDPs.
This unified framework can unify the algorithms in relevant works and
provide a new perspective for the dynamic optimization concerning steady-state variance metrics.
%   We study the risk-averse mean-variance optimization  which is effective even on unorthodox MDPs.
%With the proposed framework, different algorithms can be developed with local convergence analyses, such as gradient-based algorithms, policy iteration, and value iteration.
Second, %Taking value iteration as an example,
we develop a DMVVI algorithm and prove its convergence, which can
provide a foundation for further developing efficient
temporal-difference learning methods, such as Q-learning,
SARSA~\citep{sutton1998reinforcement}, and RL with neural networks, to variance-related optimization problems.
We believe that the algorithm framework and the DMVVI algorithm can
complement the steady-state variance optimization theory together
with the existing works of policy
iterations~\citep{XIA2016,Xia2020}.

The remainder of the paper proceeds as follows. Section~2 formulates
the risk-averse mean-variance optimization in discounted MDPs.
Section~3 %transforms the problem to a standard MDP by introducing a pseudo mean, and
proposes a unified algorithm framework with a bilevel optimization
structure. Several algorithms can be developed in this framework,
and we propose a DMVVI as an example and prove its convergence.
Section~4 gives a numerical experiment on financial dynamic
portfolio management to validate the DMVVI algorithm.
Section~5 presents concluding comments.

\section{Problem formulation}
%In this section, we firstly give
In this paper, we focus on infinite-horizon discrete-time MDPs, which can be represented by
$ \mathcal{M} = \langle S, A, r, p, \mu, \alpha \rangle $,
in which
$S = \{s_1, s_2, \cdots s_{|S|}\} $ is a finite state space, and $X_t \in S$ represents the state at (decision) epoch $t \in \mathbb{N}^+ = \{1,2,\cdots\}$;
$A(x)$ is the admissible action set for $x \in S$, $A = \bigcup_{x \in S}A(x)$ is a finite action space, and $K_t \in A$ represents the action at $t$;
$r:S \times A \rightarrow \mathbb{R}$ is a bounded reward function;
%and $r(X_t)$ denotes the immediate reward at $t$;
$p(y \mid x, a) = \mathbb{P}(X_{t+1}=y \mid X_t = x, K_t = a)$ denotes the homogeneous transition probability;
$ \mu: S \rightarrow [0,1] $ is the initial state probability mass function, and $\boldsymbol{\mu}$ is an $ |S| $-dimensional row vector with the $ n $-th entry $ \mu(s_n) $ for any $ n \in \{1, \cdots , |S|\} $;
and $ \alpha \in (0,1)$ is the discount factor.

A policy describes how to choose actions sequentially.
It is stationary when it is independent of time, and deterministic if it determines an action for each state.
In this study, we focus on stationary deterministic policy space $ D $ only.
For a given MDP, a policy $ d: S \rightarrow A $ induces a Markov reward process.
We denote its transition probability by an $ |S| $-by-$ |S| $ matrix $ \mathbf{P}_d $ with $ P_d(x, y) = p(y \mid x, d(x)) $, and its reward function by an $ |S| $-dimensional column vector $ \mathbf{r}_d $ with $ r_d(x) = r(x, d(x)) $, for $ x,y \in S $.
We further denote its stationary distribution by an $ |S| $-dimensional row vector $ \boldsymbol{\pi}_d $, with the $ n $-th entry $ \pi_d(s_n) $ for any $ n \in \{1, \cdots , |S|\} $.
%With a slight abuse of notation,
For notational simplicity, we omit the subscript %``$ \alpha $'' and
``$ d $'' when it is clear in the context. 

In this study, we concern a risk-averse discounted mean-variance objective, where the discounted variance refers to the (normalized) cumulative discounted reward deviations from the discounted mean.
%We take the steady state variance [ref 1994*2] as the measure of variability.
Firstly, we denote the discounted mean value function under a policy $ d \in D $ by %conditioned on a given initial state $ x \in S $ as below.
\begin{equation}
	\label{discMean1}
	v(x) = v_{d}(x) \coloneqq (1 - \alpha) \mathbb{E}^d_x \left \{ \sum_{t=1}^{\infty} \alpha^{t-1} r(X_t)  \right \}, \quad x \in S,
\end{equation}
where $ \mathbb{E}^d_x $ stands for the expectation given the initial state $ X_1 = x $ under the policy $ d $, and we derive the following matrix form
\begin{equation} \nonumber
	%   \label{discMean1InMat}
	\mathbf{v} = (1 - \alpha) (\mathbf{I} - \alpha \mathbf{P})^{-1} \mathbf{r}.
\end{equation}
Considering the initial state distribution $ \mu $, we have the discounted mean as
\begin{equation}
	\label{discMean}
	\eta = \eta_{d} \coloneqq (1 - \alpha) \mathbb{E}^d_\mu \left\{ \sum_{t=1}^{\infty} \alpha^{t-1} r(X_t) \right \}   = \boldsymbol{\mu} \mathbf{v},
\end{equation}
where $ \mathbb{E}^d_\mu $ stands for the expectation given $ X_1 \sim \mu $ under the policy $ d $.

Next, we define the discounted steady-state variance with a second moment value function.
The (normalized) discounted second moment value function under a policy $ d $ is
%conditioned on a given initial state $ x \in S $
\begin{equation}
	\label{discVar1}
	w(x) = w_{d}(x) \coloneqq (1 - \alpha) \mathbb{E}^d_x \left \{\sum_{t=1}^{\infty} \alpha^{t-1} r^2(X_t) \right \}, \quad x \in S,
\end{equation}
and in matrix form, we have
\begin{equation} \nonumber
	\label{discVar1InMat}
	\mathbf{w} = (1 - \alpha) (\mathbf{I} - \alpha \mathbf{P})^{-1} (\mathbf{r})^2_\odot,
\end{equation}
where ``$ \odot $'' refers to the Hadamard product, i.e., $ (\mathbf{r})^2_\odot = ( r^2(s_1), \cdots, r^2(s_{|S|}))^T $.
%\begin{equation} \nonumber
%   %   \label{Hadamard}
%   (\mathbf{r})^2_\odot = ( r^2(s_1), \cdots, r^2(s_{|S|}))^T.
%\end{equation}
Considering the initial state distribution $ \mu $, we have the (normalized) discounted steady-state variance as
\begin{equation} \nonumber
	\label{discVar}
	\zeta = \zeta_{d} \coloneqq (1 - \alpha) \mathbb{E}^d_\mu \left\{ \sum_{t=1}^{\infty} \alpha^{t-1} [r(X_t) - \eta]^2 \right \}   = \boldsymbol{\mu} (\mathbf{w} - 2 \eta \mathbf{v} + \eta^2 \mathbf{e}),
\end{equation}
where $ \mathbf{e} = (1, \cdots, 1)^T $.

%   where the concerned variance component refers to the steady-state variance~\citep{Filar89} with a discount factor.

After defining the two value functions,  %  the trade-off parameter between risk and return
we define the discounted mean-variance value function by
%conditioned on a given initial state $ x \in S $ by
\begin{equation}
	\label{mean-variance}
	u(x) = u_{d}(x) \coloneqq v (x) - \beta [w(x) - 2 \eta v(x) + \eta^2 ], \quad x \in S,
\end{equation}
where $ \beta > 0 $ is a risk-aversion parameter.
We may consider the discounted mean-variance optimization problem as a discounted MDP with a special \textit{policy-dependent} reward function represented by
\begin{equation}
	\label{fReward}
	f(x) = f_{d}(x) \coloneqq r(x) - \beta [r(x) - \eta]^2, \quad x \in S,
\end{equation}
where $ \eta $ depends on the policy.
In matrix form, we have
\begin{equation} \nonumber
	%   \label{fRewardInMat}
	\mathbf{f} = \mathbf{r} - \beta (\mathbf{r} - \eta \mathbf{e})^2_\odot,
\end{equation}
%where for notation simplicity, we omit the subscript by default when it is clear in the context.
and then we have the discounted mean-variance value function in matrix form as
\begin{equation} \nonumber
	\label{mean-varianceInMat}
	\mathbf{u} = \mathbf{v} - \beta (\mathbf{w} - 2 \eta \mathbf{v} + \eta^2 \mathbf{e}) = (1 - \alpha) (\mathbf{I} - \alpha \mathbf{P})^{-1} \mathbf{f}. %[\mathbf{r} - \beta (\mathbf{r} - \eta \mathbf{e})^2_\odot].
\end{equation}
Considering the initial state distribution $ \mu $, we have the discounted mean-variance as
\begin{equation}
	\label{obj}
	\begin{aligned}
		\xi = \xi_{d}
		& \coloneqq (1 - \alpha) \mathbb{E}^d_\mu \left\{\sum_{t=1}^{\infty} \alpha^{t-1} \{ r(X_t) - \beta [r(X_t) - \eta]^2  \} \right\} \\
		& = (1 - \alpha) \mathbb{E}^d_\mu \left\{\sum_{t=1}^{\infty} \alpha^{t-1} f(X_t)  \right\} = \eta - \beta \zeta = \boldsymbol{\mu} \mathbf{u}.
	\end{aligned}
\end{equation}
Our objective is to find a deterministic policy $ d \in D $ to maximize the discounted mean-variance, i.e.,  %i.e., the optimal steady-state variance
\begin{equation}
	\label{problemDef}
	\begin{aligned}
		\xi^* &\coloneqq \max_{d \in D} \{\xi\}, \\
		d^* &\in \mathop{\arg \max}_{d \in D} \{\xi\}.
	\end{aligned}
\end{equation}
%\begin{equation}
%   \label{optDiscSteadyStateVar}
%   \zeta^* \coloneqq \min_{d \in D} \{\zeta\},
%\end{equation}
%%and the optimal policy
%\begin{equation}
%   \label{optPolicy}
%   d^* \in \argmin_{d \in D} \{\zeta\}.
%\end{equation}
The two equations together define the risk-averse discounted mean-variance optimization problem.
We may consider this problem as a discounted MDP with a reward function defined in \eqref{fReward}, where $ \eta $ is the discounted mean defined in \eqref{discMean}.
The value of the variance part in this special reward function for each state depends on policy instead of action, i.e., the performance at a current state will be affected by actions chosen at not only the current epoch but also future epochs.
This dependency deprives the discounted variance metric of the time-consistency property.
In this case, the Bellman optimality equation does not optimize over the admissible action set for a state, but over the policy space for the whole state space, which revokes the divide-and-conquer DP methods.
In the next section, we will turn to the sensitivity-based optimization theory to solve this problem. 

\begin{remark}[Problem formulation unification by discounting]%Normalization and u
	\label{remark1}
	Besides the rationales given in the introduction section, the involvement of a discount factor unifies variance-related problems in both discounted and average MDPs.
	This unification is embodied in the continuities of the discounted mean and variance at $ \alpha = 1 $ with
	\begin{equation} \nonumber
		\label{discContinue}
		\lim_{\alpha \uparrow 1} (1 - \alpha)(\mathbf{I} - \alpha \mathbf{P})^{-1} = \mathbf{e} \boldsymbol{\pi}.
	\end{equation}
%	for $ d \in D $.
	For details, see Chapter 2 in~\citep{Cao2007}.
	In other words, with respect to an expected return/variance/mean-variance objective, the three scenarios are equivalent:
	\begin{enumerate}
		\item an MDP with a discount factor $ \alpha \uparrow 1 $;
		\item a discounted MDP with a special initial state distribution $ \boldsymbol{\mu} = \boldsymbol{\pi} $ (see Lemma~\ref{futility}); and
		\item an average MDP.
	\end{enumerate}
	
	%   one problem in a discounted MDP ``converges`` to one in an average MDP when $ \beta \uparrow 1 $, which achieves the same result as the problem in a discounted MDP with a special initial state distribution $ \boldsymbol{\mu} = \boldsymbol{\pi} $ (see Lemma~\ref{futility}).
	%   The factor $ (1 - \beta) $ is applied to normalize weighting factors in Equation~\ref{discMean1}, since $ (1 - \beta) \sum^{\infty}_{t=1} \beta^{t-1} = 1 $.
\end{remark}
In particular, the discount factor is trivial when the steady-state distribution equals the initial state distribution, for which we have the following lemma.

\begin{lemma}[The futility of discounting] %[Invalidation of discount factor {\color{red} in an MRP}] %elimination
	\label{futility}
	For a given policy $ d \in D $, the discounted mean-variance is independent of the discount factor if $ \boldsymbol{\mu} = \boldsymbol{\pi} $, i.e., the initial state distribution equals the stationary distribution.
\end{lemma}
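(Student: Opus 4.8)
The plan is to exploit the defining left-eigenvector property of the stationary distribution, namely $\boldsymbol{\pi}\mathbf{P} = \boldsymbol{\pi}$, to show that pre-multiplying the discounted resolvent $(\mathbf{I} - \alpha\mathbf{P})^{-1}$ by $\boldsymbol{\pi}$ collapses the discount factor entirely. First I would establish the key identity
\[
	(1-\alpha)\,\boldsymbol{\pi}(\mathbf{I} - \alpha\mathbf{P})^{-1} = \boldsymbol{\pi}.
\]
This follows by computing $\boldsymbol{\pi}(\mathbf{I} - \alpha\mathbf{P}) = \boldsymbol{\pi} - \alpha\boldsymbol{\pi}\mathbf{P} = (1-\alpha)\boldsymbol{\pi}$ and then right-multiplying by $(\mathbf{I} - \alpha\mathbf{P})^{-1}$, which is well defined since $\mathbf{P}$ is stochastic and $\alpha \in (0,1)$ forces $\alpha\mathbf{P}$ to have spectral radius strictly below one.

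With this identity in hand, substituting $\boldsymbol{\mu} = \boldsymbol{\pi}$ into the definitions immediately yields $\eta = \boldsymbol{\pi}\mathbf{v} = (1-\alpha)\boldsymbol{\pi}(\mathbf{I}-\alpha\mathbf{P})^{-1}\mathbf{r} = \boldsymbol{\pi}\mathbf{r}$ and, analogously, $\boldsymbol{\pi}\mathbf{w} = \boldsymbol{\pi}(\mathbf{r})^2_\odot$; both expressions are manifestly free of $\alpha$. I would then insert these into the variance formula $\zeta = \boldsymbol{\pi}(\mathbf{w} - 2\eta\mathbf{v} + \eta^2\mathbf{e})$ and use $\boldsymbol{\pi}\mathbf{v} = \eta$ together with $\boldsymbol{\pi}\mathbf{e} = 1$ to reduce it to
\[
	\zeta = \boldsymbol{\pi}\mathbf{w} - 2\eta^2 + \eta^2 = \boldsymbol{\pi}\mathbf{w} - \eta^2 = \boldsymbol{\pi}(\mathbf{r})^2_\odot - (\boldsymbol{\pi}\mathbf{r})^2.
\]
Since $\xi = \eta - \beta\zeta$, the resulting mean-variance objective depends only on $\boldsymbol{\pi}$, $\mathbf{r}$, and $\beta$, none of which carry any dependence on $\alpha$, which completes the argument.

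There is no genuine obstacle in this lemma once the eigenvector identity is recognized; the only point requiring a little care is the algebraic cancellation in $\zeta$, where the cross term $-2\eta\,\boldsymbol{\pi}\mathbf{v}$ and the quadratic term $\eta^2\,\boldsymbol{\pi}\mathbf{e}$ combine to leave precisely $-\eta^2$. It is worth remarking that this result is exactly what underpins item~(2) of Remark~\ref{remark1}: the choice $\boldsymbol{\mu} = \boldsymbol{\pi}$ makes the discounted problem coincide with its $\alpha \uparrow 1$ limit, so that under this initial distribution the discount factor becomes an inert parameter.
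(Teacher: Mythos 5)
Your proof is correct and takes essentially the same route as the paper: both arguments rest on the eigenvector property $\boldsymbol{\pi}\mathbf{P} = \boldsymbol{\pi}$ forcing the collapse $(1-\alpha)\boldsymbol{\pi}(\mathbf{I}-\alpha\mathbf{P})^{-1} = \boldsymbol{\pi}$, which the paper establishes by summing the Neumann series $\sum_{t\geq 1}\alpha^{t-1}\boldsymbol{\pi}\mathbf{P}^{t-1}$ term by term while you obtain it in one line from $\boldsymbol{\pi}(\mathbf{I}-\alpha\mathbf{P}) = (1-\alpha)\boldsymbol{\pi}$ --- a cosmetic difference only. Your explicit cancellation yielding $\zeta = \boldsymbol{\pi}(\mathbf{r})^2_\odot - (\boldsymbol{\pi}\mathbf{r})^2$ merely spells out the step the paper compresses into ``we can similarly derive that $\zeta = \boldsymbol{\pi}(\mathbf{r}-\eta\mathbf{e})^2_\odot$,'' and the two expressions agree.
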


\begin{proof}
	Since $ \boldsymbol{\pi} \mathbf{P} = \boldsymbol{\pi} $, for the discounted mean with the initial state distribution $ \boldsymbol{\mu} = \boldsymbol{\pi} $, we have
	\begin{align*}
		\eta  %&= \boldsymbol{\mu} \mathbf{v} \\
		%&= \boldsymbol{\pi} \mathbf{v} \\
		&= \boldsymbol{\pi} (1 - \alpha) (\mathbf{I} - \alpha \mathbf{P})^{-1} \mathbf{r} \\
		&= \boldsymbol{\pi} (1 - \alpha) \left (\sum_{t=1}^{\infty} \alpha^{t-1}\mathbf{P}^{t-1} \right) \mathbf{r} \\
		&= (1 - \alpha) \left (\sum_{t=1}^{\infty} \alpha^{t-1} \boldsymbol{\pi} \mathbf{P}^{t-1} \right) \mathbf{r} \\
		&= (1 - \alpha) \sum_{t=1}^{\infty} \alpha^{t-1}\boldsymbol{\pi} \mathbf{r} \\
		&= \boldsymbol{\pi} \mathbf{r}.
	\end{align*}
	Hence, the discounted mean is independent of the discount factor when $ \boldsymbol{\mu} = \boldsymbol{\pi} $~\citep{sutton1998reinforcement}, and we can similarly derive that $ \zeta = \boldsymbol{\pi} (\mathbf{r} - \eta \mathbf{e})^2_\odot $ in this case. Therefore, we have $ \xi = \boldsymbol{\pi} \mathbf{f} $, i.e., the discounted mean-variance is independent of the discount factor when $ \boldsymbol{\mu} = \boldsymbol{\pi} $. %{\hfill \Halmos} %, and this discounted variance problem degrades to the steady-state variance problem~\citep{XIA2016269}.
\end{proof}
%\proof{Proof of Lemma \ref{futility}.}
%\endproof

\section{Unified algorithm framework and discounted mean-variance value iteration}

In this section, we propose a unified algorithm framework for the risk-averse mean-variance optimization %present how to develop an algorithm for some variance-related criterion,
and give a value iteration algorithm as an example.
First, we introduce the pseudo mean to remove the policy dependency of the reward function, and derive the discounted mean-variance PDF, which has a square term to handle the error from the introduction of pseudo mean. % quite wired to say in this way, contributions should be listed in Intro.
Next, %We consider the problem of optimizing over the pseudo mean in the resultant MDP.
we propose a unified algorithm
framework with a bilevel optimization structure, where the inner problem refers to a standard MDP with a reward function dependent on a fixed pseudo mean, and
the outer problem concerns a one-dimensional optimization of the pseudo mean.
We show that risk-averse (mean-)variance optimization can be solved by algorithm variants in the proposed framework.
Finally, we develop a value iteration in the framework for the discounted mean-variance problem.
Furthermore, we prove its local convergence with a Bellman local-optimality equation, which is a necessary and sufficient condition for local optimality of a policy.

\subsection{Performance difference formula}
One key result of the sensitivity-based optimization theory is the performance difference formula (PDF).
Based on the performance sensitivity analysis, a PDF quantifies the difference between system performances under any two policies.
This theory is valid even for unorthodox Markov systems where the traditional DP methods fail~\citep{Cao2007}.
For the concerned mean-variance optimization, Equation~\eqref{fReward} shows that the reshaped reward function depends on the discounted mean $ \eta $, which is unknown and affected by future actions.
To handle this policy dependency, we firstly replace $ \eta $ with a pseudo mean $ \lambda \in \mathbb{R} $, and define a pseudo reward function for any $ d \in D $ by
\begin{equation}
	\label{pseudoReward}
	f_{\lambda}(x) = f_{\lambda, d}(x) \coloneqq r(x) - \beta [r(x) - \lambda] ^2, \quad x \in S,
\end{equation}
and in matrix form, we have
\begin{equation} \nonumber
	\label{pseudoRewardInMat}
	\mathbf{f}_{\lambda} =  \mathbf{r} - \beta (\mathbf{r} - \lambda \mathbf{e})^2_\odot = ( r(s_1) - \beta [r(s_1) - \lambda]^2, \cdots, r(s_{|S|}) - \beta [r(s_{|S|}) - \lambda]^2)^T.
\end{equation}
%where we omit the subscript $ d $ for simplicity.
The corresponding pseudo discounted mean-variance value function under policy $ d \in D $ is
%conditioned on a given initial state $ x \in S $
\begin{equation} \nonumber
	\label{pseudoDiscVar1}
	u_{\lambda}(x) \coloneqq (1 - \alpha) \mathbb{E}^d_x \left \{\sum_{t=1}^{\infty} \alpha^{t-1} f_{\lambda}(X_t)  \right \}, \quad x \in S,
\end{equation}
and in matrix form, we have
\begin{equation}
	\label{pseudoDiscVar1InMat}
	\mathbf{u}_{\lambda} = (1 - \alpha) (\mathbf{I} - \alpha \mathbf{P})^{-1} \mathbf{f}_{\lambda}.
\end{equation}
Considering the initial state distribution $ \mu $, we have the pseudo discounted mean-variance as
\begin{equation}
	\label{pseudoObj}
	\xi_{\lambda} = \xi_{\lambda,d} \coloneqq (1 - \alpha) \mathbb{E}^d_\mu \left\{ \sum_{t=1}^{\infty} \alpha^{t-1} f_{\lambda}(X_t)  \right\} = \boldsymbol{\mu} \mathbf{u}_{\lambda}. %= \boldsymbol{\mu} (\mathbf{v} - \beta \mathbf{w})
\end{equation}
Now we have a standard MDP with the pseudo reward function \eqref{pseudoReward}, %by replacing the reward function with the pseudo reward function~\ref{pseudoReward},
and the difference between the pseudo discounted mean-variance $ \xi_{\lambda} $ and the discounted mean-variance $ \xi $ can be measured.
\begin{lemma}[Deviation of pseudo discounted mean-variance]
	\label{lemma2}
	The pseudo discounted mean-variance and the discounted mean-variance have the following relation
	\begin{equation} \nonumber
		\label{pseudoVarDiff}
		\xi_{\lambda} = \xi - \beta (\eta - \lambda )^2.
		%       \mathbf{w}_{\lambda} = \mathbf{w} + 2( \eta - \lambda ) \mathbf{v} -(\eta^2 - \lambda^2 ) \mathbf{e}.
	\end{equation}
\end{lemma}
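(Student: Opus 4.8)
The plan is to work directly from the expectation definition of $\xi_{\lambda}$ in \eqref{pseudoObj} and reduce it to $\xi$ by algebraically relating the pseudo reward $f_{\lambda}$ to the true reward $f$. Because $\xi$ and $\xi_{\lambda}$ are discounted expectations over the \emph{same} Markov reward process under the same policy $d$ (only the shaping constant $\lambda$ in place of $\eta$ differs), the whole statement is an identity obtained by expanding a square; there is no analytic subtlety here, only careful bookkeeping of the discounted expectations.

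First I would write $f_{\lambda}(x) = r(x) - \beta[r(x)-\lambda]^2$ and insert the decomposition $r(x)-\lambda = [r(x)-\eta] + [\eta-\lambda]$, so that
\[
	[r(x)-\lambda]^2 = [r(x)-\eta]^2 + 2[\eta-\lambda]\,[r(x)-\eta] + [\eta-\lambda]^2 .
\]
Substituting this into $\xi_{\lambda} = (1-\alpha)\,\mathbb{E}^d_\mu\{\sum_{t=1}^{\infty} \alpha^{t-1} f_{\lambda}(X_t)\}$ splits $\xi_{\lambda}$ into four discounted sums: the mean term $(1-\alpha)\,\mathbb{E}^d_\mu\{\sum_{t=1}^{\infty} \alpha^{t-1} r(X_t)\}$, the variance term $-\beta(1-\alpha)\,\mathbb{E}^d_\mu\{\sum_{t=1}^{\infty} \alpha^{t-1}[r(X_t)-\eta]^2\}$, a cross term, and a constant term.

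Then I would evaluate the four pieces using two facts. The first is the normalization $(1-\alpha)\sum_{t=1}^{\infty}\alpha^{t-1} = 1$, which identifies the mean term with $\eta$ by \eqref{discMean}, identifies the variance term with $-\beta\zeta$, and turns the constant piece $-\beta[\eta-\lambda]^2(1-\alpha)\,\mathbb{E}^d_\mu\{\sum_{t=1}^{\infty}\alpha^{t-1}\}$ into exactly $-\beta(\eta-\lambda)^2$. The second, and the only step that deserves a word of care, is that the cross term vanishes: $(1-\alpha)\,\mathbb{E}^d_\mu\{\sum_{t=1}^{\infty}\alpha^{t-1}[r(X_t)-\eta]\} = \eta - \eta = 0$, precisely because $\eta$ is defined as the discounted mean of $r$ and the discounted weights sum to one. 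Collecting the surviving pieces yields $\xi_{\lambda} = \eta - \beta\zeta - \beta(\eta-\lambda)^2 = \xi - \beta(\eta-\lambda)^2$, using $\xi = \eta - \beta\zeta$ from \eqref{obj}.

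The only thing to watch is this vanishing of the linear (cross) term. It is tempting to expand the square around $\lambda$ rather than around $\eta$, which would leave a nonzero first-order contribution and obscure the clean quadratic error $\beta(\eta-\lambda)^2$. Expanding around the true discounted mean $\eta$ is therefore the key move, after which the entire claim collapses to the normalization identity for the discount weights.
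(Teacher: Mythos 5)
Your proof is correct and matches the paper's argument in substance: both insert the decomposition $r-\lambda=(r-\eta)+(\eta-\lambda)$ into the pseudo reward, expand the square, and reduce the remainder via the two normalization identities $(1-\alpha)\,\boldsymbol{\mu}(\mathbf{I}-\alpha\mathbf{P})^{-1}\mathbf{r}=\eta$ and $(1-\alpha)\,\boldsymbol{\mu}(\mathbf{I}-\alpha\mathbf{P})^{-1}\mathbf{e}=1$, which in your expectation notation are exactly the vanishing cross term and the constant term. The only difference is cosmetic---the paper works in matrix form and evaluates the leftover $2\eta\mathbf{r}-2\lambda\mathbf{r}-\eta^2\mathbf{e}+\lambda^2\mathbf{e}$ at the end, whereas you split off the cross and constant pieces up front---so this is essentially the same proof.
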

%\proof{Proof of Lemma \ref{lemma2}.} 
\begin{proof}
	From \eqref{pseudoObj}, we have
	\begin{align*}
		\xi_{\lambda}  %&= \boldsymbol{\mu} \mathbf{u}_{\lambda} \\
		&= (1 - \alpha) \boldsymbol{\mu} (\mathbf{I} - \alpha \mathbf{P})^{-1} \mathbf{f}_{\lambda} \\
		&= (1 - \alpha) \boldsymbol{\mu} (\mathbf{I} - \alpha \mathbf{P})^{-1} [\mathbf{r} - \beta (\mathbf{r} - \lambda \mathbf{e})^2_\odot] \\
		&= (1 - \alpha) \boldsymbol{\mu} (\mathbf{I} - \alpha \mathbf{P})^{-1} [\mathbf{r} - \beta(\mathbf{r} - \eta \mathbf{e} + \eta \mathbf{e} -\lambda \mathbf{e})^2_\odot] \\
		&= (1 - \alpha) \boldsymbol{\mu} (\mathbf{I} - \alpha \mathbf{P})^{-1} \left\{[ \mathbf{r} - \beta (\mathbf{r} - \eta \mathbf{e})^2_\odot] - \beta [2 \eta \mathbf{r} - 2 \lambda \mathbf{r}  - \eta^2 \mathbf{e} + \lambda^2 \mathbf{e} ]\right\} \\
		&= \xi - \beta (1 - \alpha) \boldsymbol{\mu} (\mathbf{I} - \alpha \mathbf{P})^{-1}  [2 \eta \mathbf{r} - 2 \lambda \mathbf{r}  - \eta^2 \mathbf{e} + \lambda^2 \mathbf{e} ].
	\end{align*}
	With \eqref{discMean} and noticing that $ (1 - \alpha) \boldsymbol{\mu} (\mathbf{I} - \alpha \mathbf{P})^{-1} \mathbf{e} = 1 $, we have
	\begin{align*}
		\xi_{\lambda} &= \xi - \beta  (2 \eta^2 - 2 \lambda \eta  - \eta^2  + \lambda^2)  \\
		&= \xi - \beta (\eta - \lambda )^2.  
	\end{align*}
\end{proof}

\begin{remark}[Means in discounted variance]
	One may be tempted to set the long-run average $ \eta_{a} = \lim_{T \rightarrow \infty} \frac{1}{T} \mathbb{E}\{ \sum_{t = 1}^{T} r(X_t) \} $ as the value from which the deviations are measured.
	Though it has a straightforward physical meaning, it is not the real mean in this discounted setting.
	When a discount factor is involved, it implies that the underpinned occupation measure of state-action pairs is in a discounted form, so the real ``baseline'' is the first central moment---the discounted mean $ \eta $. %occurrence frequency
	This claim is supported by Lemma~\ref{lemma2} as well, since the real mean should minimize variance (maximize mean-variance). %, if we set $ \lambda = \eta_{a} $, we will have the discounted mean-variance as $ \xi - \beta (\eta - \eta_{a} )^2 $,
	%       which is smaller than its counterpart with $ \eta $.
\end{remark}

To construct the discounted mean-variance PDF, we start by quantifying the difference between any two pseudo discounted mean-variance functions under two policies with respect to a pseudo mean $ \lambda \in \mathbb{R} $.
From \eqref{pseudoDiscVar1InMat}, for $ d \in D $ we have
\begin{equation} \nonumber
	%   \label{pseudoDiscVar1InMat}
	\mathbf{u}_{\lambda} = (1 - \alpha) \mathbf{f}_{\lambda} + \alpha \mathbf{P} \mathbf{u}_{\lambda}.
\end{equation}
%under a policy .
Denote the other pseudo discounted mean-variance function by $ \mathbf{u}'_{\lambda} $ under $ d' \in D $, with the transition matrix $ \mathbf{P}' $ and the pseudo reward $ \mathbf{f}'_{\lambda} $, and then we have the difference as
\begin{align}
	\mathbf{u}'_{\lambda} - \mathbf{u}_{\lambda}  &= (1 - \alpha) (\mathbf{f}'_{\lambda} - \mathbf{f}_{\lambda}) + \alpha (\mathbf{P}' \mathbf{u}'_{\lambda} - \mathbf{P} \mathbf{u}_{\lambda})  \notag \\
	&= (1 - \alpha) (\mathbf{f}'_{\lambda} - \mathbf{f}_{\lambda}) + \alpha (\mathbf{P}' \mathbf{u}'_{\lambda} - \mathbf{P} \mathbf{u}_{\lambda} + \mathbf{P}' \mathbf{u}_{\lambda} - \mathbf{P}' \mathbf{u}_{\lambda})  \notag  \\
	&= (1 - \alpha) (\mathbf{f}'_{\lambda} - \mathbf{f}_{\lambda}) + \alpha (\mathbf{P}' - \mathbf{P}) \mathbf{u}_{\lambda} + \alpha \mathbf{P}' (\mathbf{u}'_{\lambda} - \mathbf{u}_{\lambda})  \label{tmp11}  \\
	&= (1 - \alpha) (\mathbf{I} - \alpha \mathbf{P}')^{-1} (\mathbf{f}'_{\lambda} - \mathbf{f}_{\lambda}) + \alpha (\mathbf{I} - \alpha \mathbf{P}')^{-1} (\mathbf{P}' - \mathbf{P}) \mathbf{u}_{\lambda} \label{tmp12} \\
	&= (\mathbf{I} - \alpha \mathbf{P}')^{-1} [(1 - \alpha)  (\mathbf{f}'_{\lambda} - \mathbf{f}_{\lambda}) + \alpha  (\mathbf{P}' - \mathbf{P}) \mathbf{u}_{\lambda}] \label{standardVI},
\end{align}
noticing from \eqref{tmp11} to \eqref{tmp12}, we have $ (I - \alpha \mathbf{P}') (\mathbf{u}'_{\lambda} - \mathbf{u}_{\lambda}) = (1 - \alpha) (\mathbf{f}'_{\lambda} - \mathbf{f}_{\lambda}) + \alpha (\mathbf{P}' - \mathbf{P}) \mathbf{u}_{\lambda} $.
Equation~\eqref{standardVI} checks the update rule of the standard value iteration from the perspective of PDF, and it explains why the standard value iteration converges to a global optimum.
For the discounted mean-variance optimization, multiply the initial state distribution $ \mu $ on both sides, and then we have the PDF for the pseudo discounted mean-variance as
\begin{equation} \nonumber
	%   \label{pseudoDiscVar1InMat}
	\xi'_{\lambda} - \xi_{\lambda} = \boldsymbol{\mu} (\mathbf{I} - \alpha \mathbf{P}')^{-1}  \left[ (1 - \alpha) (\mathbf{f}'_{\lambda} - \mathbf{f}_{\lambda}) + \alpha (\mathbf{P}' -  \mathbf{P}) \mathbf{u}_{\lambda} \right]  .
\end{equation}
Furthermore, with Lemma~\ref{lemma2} we have the PDF for the discounted mean-variance as
\begin{equation}
	%   \begin{aligned}
	\xi' - \xi =  \boldsymbol{\mu} (\mathbf{I} - \alpha \mathbf{P}')^{-1}  [ (1 - \alpha) (\mathbf{f}'_{\lambda} - \mathbf{f}_{\lambda}) + \alpha (\mathbf{P}' -  \mathbf{P}) \mathbf{u}_{\lambda} ]  + \beta (\eta' - \lambda )^2 - \beta (\eta - \lambda )^2.
	%   \end{aligned}
	\label{performance difference formula}
\end{equation}
Equation~\eqref{performance difference formula} quantifies the difference between the mean-variance performances under two policies in an MDP with a pseudo reward.
Based on \eqref{performance difference formula}, it is straightforward to develop a policy iteration for the risk-averse (mean-)variance optimization problems.
For the problems in average MDPs see \citep{XIA2016,Xia2020}.

{
	\color{black}
	The involvement of the pseudo mean brings in the last two terms in \eqref{performance difference formula} and makes the variance-related optimization converge to a local optimum.
	To clarify the local optimality, we present the discounted mean-variance performance derivative formula, which is another fundamental concept in the theory of sensitivity-based optimization. Different from the PDF, the derivative formula captures the behavior when the policy changes in a small local region.
	To see that, we first define a mixed policy space with the concept of mixed policy. For any two policies $d, d^\prime \in D$, we define a mixed policy $d^{\delta, d^\prime}$ for $\delta \in (0, 1)$, which follows $d$ with probability $1 - \delta$ and follows $d^\prime$ in the rest. It is easy to verify that $ \mathbf{P^\delta} = \mathbf{P} + \delta(\mathbf{P}' - \mathbf{P}) $ and $ \mathbf{f}^\delta_\lambda = \mathbf{f}_\lambda + \delta(\mathbf{f}'_\lambda - \mathbf{f}_\lambda) $.
	Substituting them into \eqref{performance difference formula}, we derive the performance difference between $d^{\delta, d^\prime}$ and $d$ as
	\begin{equation} \nonumber
		%       \begin{aligned}
		\xi^{\delta} - \xi = \boldsymbol{\mu}(\mathbf{I} - \alpha \mathbf{P}^\delta)^{-1} \delta [ (1 - \alpha) (\mathbf{f}'_{\lambda} - \mathbf{f}_{\lambda}) + \alpha (\mathbf{P}' -  \mathbf{P}) \mathbf{u}_{\lambda} ]  + \beta (\eta^\delta - \lambda )^2 -\beta (\eta - \lambda )^2.
		%       \end{aligned}
	\end{equation}
	Letting $\delta \to 0$, we obtain the derivative formula in the mixed policy space,
	\begin{equation}
		\frac{ \mathrm{d} \xi}{  \mathrm{d} \delta} = \boldsymbol{\mu} (\mathbf{I} - \alpha \mathbf{P})^{-1}  [ (1 - \alpha) (\mathbf{f}'_{\lambda} - \mathbf{f}_{\lambda}) + \alpha (\mathbf{P}' -  \mathbf{P}) \mathbf{u}_{\lambda} ]  + 2 \beta (\eta - \lambda)\frac{ \mathrm{d}\eta} { \mathrm{d} \delta},
		\label{performance derivative formula}
	\end{equation}
	where $ \lim_{\delta \to 0} \mathbf{P}^\delta = \mathbf{P}$ and
	$
	\lim_{\delta \to 0} \frac{ \mathrm{d} (\eta^\delta - \lambda )^2}{ \mathrm{d} \delta} = 2 (\eta - \lambda )\frac{ \mathrm{d}\eta} { \mathrm{d} \delta}
	$.
}
Next, we present a unified algorithm framework for the risk-averse discounted mean-variance optimization and develop a value iteration algorithm with a provable local convergence.

\subsection{Unified algorithm framework}
In this subsection, we propose a unified algorithm framework for the risk-averse discounted mean-variance optimization.
This framework has a bilevel optimization structure, where the inner problem refers to a standard MDP with a reward function dependent on a fixed pseudo mean, and
the outer problem concerns a one-dimensional optimization of the pseudo mean.
In particular, for variance-related problems (risk-averse discounted/average variance/mean-variance, etc.),
the outer problem has a closed-form solution.
Different algorithm variants can be developed with different solvers to the inner problem.
%Variants of algorithms can be developed in this framework,
%This DMVVI in the framework can be easily extended to model-free RL algorithms analogous to Q-learning and SARSA.
Moreover, the proposed framework is applicable to some other variance-related optimality criteria as well.

The difficulty of the risk-averse (mean-)variance optimization lies in that the variance metric is a function of the discounted mean.
This dependency suppresses the time-consistency property, and so that the traditional DP methods are not applicable.
To remove the dependency, we introduce the pseudo mean to transform the special MDP to a standard one, where traditional DP methods can be applied.
Mathematically, the introduction of the pseudo mean results in a bilevel optimization problem, which can be further extended to such problem equivalences.
\begin{lemma}[Problem equivalences with pseudo mean]
	\label{lemma4}
	\begin{align}
		\xi^* = \max_{d \in D} \{\xi\}
		&=  \max_{d \in D} \left \{ \max_{\lambda \in \mathbb{R}} \{ \xi_\lambda \} \right \} \label{pseudomean} \\
		&= \max_{\lambda \in \mathbb{R}} \left \{ \max_{d \in D} \{ \xi_\lambda \} \right \}  \label{2level} \\ %\langle  ( \zeta_\lambda(x) )_{x \in S}, \boldsymbol{\mu} \rangle
		&= \max_{\lambda \in \mathbb{R}} \left \{ \max_{d \in D} \{ \boldsymbol{\mu} \mathbf{u}_\lambda \} \right \}  \label{PI} \\
		&= \max_{\lambda \in \mathbb{R}} \left\{ \langle (\max_{d \in D}   \{u_\lambda(x)\} )^T_{x \in S}, \boldsymbol{\mu}^T \rangle \right\} \label{pseudoVarOpt1}.
	\end{align}
\end{lemma}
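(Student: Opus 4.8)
The plan is to establish the four equalities \eqref{pseudomean}--\eqref{pseudoVarOpt1} one at a time, reading the chain from top to bottom, since each reduces to either Lemma~\ref{lemma2}, a routine interchange of maxima, a definitional substitution, or a single appeal to classical discounted-MDP theory.

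First I would prove \eqref{pseudomean}. Fix an arbitrary policy $d \in D$; then $\xi = \xi_d$ and $\eta = \eta_d$ are constants, and Lemma~\ref{lemma2} gives $\xi_\lambda = \xi - \beta(\eta - \lambda)^2$ as a function of $\lambda \in \mathbb{R}$. Since $\beta > 0$, this is a strictly concave downward parabola in $\lambda$, so $\max_{\lambda \in \mathbb{R}} \xi_\lambda = \xi$, attained uniquely at $\lambda = \eta_d$. Taking $\max_{d \in D}$ on both sides yields \eqref{pseudomean}. The equality \eqref{2level} is then just the interchange of two maxima over the product domain $D \times \mathbb{R}$: both sides equal $\max_{(d,\lambda) \in D \times \mathbb{R}} \xi_\lambda$, and the maximizer exists because $D$ is finite and, for each $d$, the parabola above attains its maximum. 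The equality \eqref{PI} is purely definitional, substituting $\xi_\lambda = \boldsymbol{\mu}\mathbf{u}_\lambda$ from \eqref{pseudoObj}.

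The only substantive step is \eqref{pseudoVarOpt1}, which moves the inner $\max_{d \in D}$ inside the inner product. Fix $\lambda \in \mathbb{R}$. Because $\mathbf{f}_\lambda$ no longer depends on the policy's value function, $\mathbf{u}_\lambda = (1-\alpha)(\mathbf{I} - \alpha\mathbf{P})^{-1}\mathbf{f}_\lambda$ is exactly the normalized value function of a \emph{standard} discounted MDP with stage reward $\mathbf{f}_\lambda$. I would then invoke the classical result that such an MDP admits a stationary deterministic policy $d_\lambda^*$ that is \emph{uniformly} optimal, i.e. $u_{\lambda,d_\lambda^*}(x) = \max_{d \in D} u_\lambda(x)$ simultaneously for every $x \in S$ (this follows from the Bellman optimality equation and the Banach contraction argument for the discounted operator; see, e.g., \citealp{Puterman1994a}). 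The trivial direction $\max_{d} \boldsymbol{\mu}\mathbf{u}_\lambda = \max_d \sum_{x} \mu(x) u_\lambda(x) \le \sum_x \mu(x) \max_d u_\lambda(x)$ holds because a maximum of a sum never exceeds the sum of the maxima; the reverse inequality is furnished by evaluating the left-hand side at $d_\lambda^*$, which attains all $|S|$ componentwise maxima at once. This gives $\max_{d} \boldsymbol{\mu}\mathbf{u}_\lambda = \langle(\max_{d} u_\lambda(x))^T_{x \in S}, \boldsymbol{\mu}^T\rangle$, and taking $\max_{\lambda \in \mathbb{R}}$ yields \eqref{pseudoVarOpt1}.

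The main obstacle is therefore concentrated entirely in \eqref{pseudoVarOpt1}: one must justify pulling $\max_{d}$ through the $\boldsymbol{\mu}$-weighted sum, which is legitimate precisely because the fixed-$\lambda$ problem is a genuine discounted MDP and hence possesses an initial-distribution-independent optimal policy. Everything else is either an immediate consequence of Lemma~\ref{lemma2} (for \eqref{pseudomean}), an elementary swap of maxima (for \eqref{2level}), or a substitution of definitions (for \eqref{PI}).
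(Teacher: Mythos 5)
Your proposal is correct and follows essentially the same route as the paper's proof: Lemma~\ref{lemma2}'s parabola argument for \eqref{pseudomean}, interchange of maxima for \eqref{2level}, the definition \eqref{pseudoObj} for \eqref{PI}, and the observation that the fixed-$\lambda$ problem is a standard discounted MDP for \eqref{pseudoVarOpt1}. The only difference is that you spell out in full the step the paper merely gestures at---the existence of a uniformly (initial-distribution-independent) optimal stationary deterministic policy that lets $\max_{d \in D}$ pass through the $\boldsymbol{\mu}$-weighted sum---which is a welcome elaboration rather than a departure.
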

%\proof{Proof of Lemma \ref{lemma4}.}
\begin{proof}
	Lemma~\ref{lemma2} implies that \eqref{pseudomean} holds with $ \lambda = \mathop{\arg \max}_{\lambda \in \mathbb{R}} \{ \xi_\lambda \} = \eta_{d} $.
	Since the outer and inner operators are both maximum, the two are exchangeable and \eqref{2level} holds.
	%   Equation~\ref{PI} holds because of Equation~\ref{pseudoObj}.
	Equation~\eqref{PI} comes from \eqref{pseudoObj}.
	Noticing that for a given $ \lambda $, the inner optimization refers to a standard MDP, and the optimal mean results from the optimal value function $ u_\lambda $.
\end{proof} 

Equation~\eqref{2level} underpins the bilevel algorithm framework.
By introducing the pseudo mean $ \lambda $, the original problem is
transformed to a bilevel problem, where the inner problem concerns
a standard MDP $ \mathcal{M}_\lambda = \langle S, A, f_\lambda, p,
\mu, \alpha \rangle $, and the outer problem refers to a
one-dimensional optimization of the variable $\lambda$. The framework
is shown in Algorithm~\ref{2stageAlgFr}.

\renewcommand{\algorithmicrequire}{\textbf{Input:}}
\renewcommand{\algorithmicensure}{\textbf{Output:}}
\def\NoNumber#1{{\def\alglinenumber##1{}\State #1}\addtocounter{ALG@line}{-1}}
\begin{algorithm}[H]
	\caption{A unified algorithm framework for discounted mean-variance optimization}
	\label{2stageAlgFr}
	\begin{algorithmic} %[1]
		
		\Require
		The MDP $ \mathcal{M} $; initialize two pseudo means $ \lambda , \lambda' \in \mathbb{R}$ with $\lambda' \neq \lambda $  %and $ d, d' \in D $, where $\lambda' \neq \lambda $ and $ d \neq d' $
		\Ensure
		A local optimal policy $d$ and the discounted mean-variance optimum $\xi_{d}$
		
		\While {$ \lambda \neq \lambda' $}
		\State $ \lambda \leftarrow \lambda' $
		\State Construct a standard MDP $\mathcal{M}_{\lambda}$.
		By using standard/optimistic algorithms (e.g., policy iteration, value iteration, or policy gradient), solve or partly solve $\mathcal{M}_{\lambda}$ to obtain an improved policy $d$ and $\lambda'$  \Comment{Inner optimization}
		%        Two examples are given for policy iteration and value iteration, respectively.
		
		\EndWhile
		\State Return $d$ %(or derive it from $ \mathbf{u}_{\lambda} $)
		and $\xi_{d} = \boldsymbol{\mu} \mathbf{u}_{\lambda} $
	\end{algorithmic}
\end{algorithm}

In the bilevel framework, the inner optimization helps optimize $
\lambda $, which means to keep updating it with a value closer to
the discounted mean of any local optimum. For any fixed $ \lambda $,
the resultant $ \mathcal{M}_{\lambda} $ is a standard MDP, so there
are two threads to optimize $ \lambda $. One is to calculate the
optimal discounted mean with a standard DP algorithm. The
convergence of algorithms stemming from this thread is guaranteed by
the convergence of the involved DP algorithm and Lemma~\ref{lemma2}, %,
which claims that the pseudo discounted mean-variance $
\xi_{\lambda} $ equals the discounted mean-variance $ \xi $ when $
\lambda = \eta $. This thread is straightforward but could be
conservative. The other thread is to improve $ \lambda $ with an
intermediate value during its process converging to $ \eta_\lambda
$. The variant of policy iteration implementing in this thread is
well known as the optimistic policy
iteration~\citep{sutton1998reinforcement}, which has been studied
for solving the risk-averse variance and mean-variance optimizations
in average MDPs in~\citep{XIA2016,Xia2020}, respectively. Here we
give simplified descriptions on variants of policy iteration and
value iteration for the inner optimization as examples in
Algorithms~\ref{InnerPI} and \ref{InnerVI}~\footnote{The norm used through the paper could be $ p $-norm for $ p \in \{1, 2, +\infty\} $.}~\footnote{For the value
	iteration variants, the policy $ d $ should be derived at the end of
	Algorithm~\ref{2stageAlgFr}, and here we put it in the descriptions
	for structure unification only.}.
The inputs of these four algorithm
variants are the MDP $ \mathcal{M}$ and the current pseudo mean $ \lambda $, and the outputs
are the updated policy $d$ and pseudo mean
$\lambda$. For some realizations of the algorithm variants,
we need to add the additional initializations to the input of the
framework. It is worth noting that, though a set of DP algorithms,
such as policy gradient, linear programming, and policy iteration,
can be applied in the first thread, the convergences of their
optimistic counterparts need further deliberations.

\renewcommand{\algorithmicrequire}{\textbf{Add. Init.:}} %Additional Initialization
\begin{algorithm}[h]
	\caption{Policy iteration variants for inner optimization in Algorithm~\ref{2stageAlgFr}}
	\label{InnerPI}
	\begin{algorithmic}
		
		\State
		\vspace{-20pt}
		\begin{multicols}{2}
			
			\State {\it{Standard version:}}
			\State Initialize $ d, d' \in D $ with $ d \neq d' $  %with the default inputs of Algorithm~\ref{2stageAlgFr}
			
			\While {$ d \neq d' $}
			
			\State $ d \leftarrow d' $
			\State $ \mathbf{u}_{\lambda} \leftarrow (1 - \alpha) (\mathbf{I} - \alpha \mathbf{P}_d)^{-1} \mathbf{f}_{\lambda, d} $
			\State $ d' \in \mathop{\arg \max}_{d \in D}    \left \{ (1-\alpha) \mathbf{f}_{\lambda, d} + \alpha \mathbf{P}_d \mathbf{u}_{\lambda}     \right \} $
			
			\EndWhile
			\State $ \lambda' \leftarrow (1 - \alpha) \boldsymbol{\mu} (\mathbf{I} - \alpha \mathbf{P}_d)^{-1} \mathbf{r}_d $
			
			\columnbreak
			%%%%%%%%%%%%%%%%%%%%%%%%%%
			%            \NoNumber{\it Optimistic version:} %$ \bullet $
			\State {\it{Optimistic version:}}
			\Require
			$ d \in D $
			\State $ \mathbf{u}_{\lambda} \leftarrow (1 - \alpha) (\mathbf{I} - \alpha \mathbf{P}_d)^{-1} \mathbf{f}_{\lambda, d} $
			\State $ d \in \mathop{\arg \max}_{d \in D}    \left \{ (1-\alpha) \mathbf{f}_{\lambda, d} + \alpha \mathbf{P}_d \mathbf{u}_{\lambda}     \right \} $
			\State $ \lambda' \leftarrow (1 - \alpha) \boldsymbol{\mu} (\mathbf{I} - \alpha \mathbf{P}_d)^{-1} \mathbf{r}_d $
		\end{multicols}
		
	\end{algorithmic}
\end{algorithm}

\begin{algorithm}[h]
	\caption{Value iteration variants for inner optimization in Algorithm~\ref{2stageAlgFr}}
	\label{InnerVI}
	\begin{algorithmic}
		
		%       \Require
		%       A standard MDP $\mathcal{M}_{\lambda}$, a small $ \theta $, and value functions $\mathbf{v}, \mathbf{u}_{\lambda}, \mathbf{u}'_{\lambda} \in \mathbb{R}^{|S|} $ with $ \| \mathbf{u}_{\lambda} - \mathbf{u}'_{\lambda} \| > \theta $
		%       \Ensure
		%       An improved policy $d$ and an improved $ \lambda' $
		\State
		\vspace{-20pt}
		\begin{multicols}{2}
			\State {\it{Standard version:}}  %$ \bullet $
			\Require
			a small constant $ \theta > 0 $
			\State Initialize value functions $\mathbf{v}, \mathbf{u}_{\lambda}, \mathbf{u}'_{\lambda} \in \mathbb{R}^{|S|} $ with $ \| \mathbf{u}_{\lambda} - \mathbf{u}'_{\lambda} \| > \theta $
			\While {$ \| \mathbf{u}_{\lambda} - \mathbf{u}'_{\lambda} \| > \theta $}
			
			\State $ \mathbf{u}_{\lambda} \leftarrow \mathbf{u}'_{\lambda} $
			\State {\small $ d \in \mathop{\arg \max}_{d \in D}    \left \{ (1-\alpha) \mathbf{f}_{\lambda, d} + \alpha \mathbf{P}_d \mathbf{u}_{\lambda}     \right \} $}
			\State $ \mathbf{u}'_{\lambda} \leftarrow  (1-\alpha) \mathbf{f}_{\lambda, d} + \alpha \mathbf{P}_d \mathbf{u}_{\lambda} $
			\State $ \mathbf{v} $ $\leftarrow (1-\alpha) \mathbf{r}_d + \alpha \mathbf{P}_d \mathbf{v}$
			
			\EndWhile
			\State $ \lambda' \leftarrow  \boldsymbol{\mu} \mathbf{v} $

			\columnbreak
			%%%%%%%%%%%%%%%%%%%%%%%%%%
			\State {\it{Optimistic version:}}
			\Require
			value functions $\mathbf{v}, \mathbf{u}_{\lambda} \in \mathbb{R}^{|S|} $
			\State $ d \in \mathop{\arg \max}_{d \in D}    \left \{ (1-\alpha) \mathbf{f}_{\lambda, d} + \alpha \mathbf{P}_d \mathbf{u}_{\lambda}     \right \} $
			\State $ \mathbf{u}_{\lambda} \leftarrow  (1-\alpha) \mathbf{f}_{\lambda, d} + \alpha \mathbf{P}_d \mathbf{u}_{\lambda} $
			\State $ \mathbf{v} $ $\leftarrow (1-\alpha) \mathbf{r}_d + \alpha \mathbf{P}_d \mathbf{v}$
			\State $ \lambda' \leftarrow  \boldsymbol{\mu} \mathbf{v} $
			
		\end{multicols}

	\end{algorithmic}
\end{algorithm}

A variety of algorithms for variance-related optimization in previous works can be unified and analyzed in the proposed framework.
When the equivalence in \eqref{PI} is concerned, we have %can apply the policy iterations to the problems, such as the works in~\citep{XIA2016,Xia2020,Zhang2021}.
the policy gradient for the mean-variance optimizations in discounted MDPs~\citep{Bisi2020ijcai}, and the policy iteration for the mean-variance optimization in discounted and average MDPs~\citep{Zhang2021}.
However, no convergence analysis is given in either of the works, such as the analyses for the policy iterations in \citep{XIA2016,Xia2020}. %average MDPs~\citep{Prashanth2013NIPS}
Since most, if not all, of the algorithms exploit the variance property described in Lemma~\ref{lemma2} and result in local optima, a convergence analysis is crucial.
In the proposed framework, a convergence analysis can be developed with the aid of a PDF.

\begin{remark}[Unified algorithm framework] %of criteria and algorithms
	The unification in the algorithm framework is twofold.
	\begin{enumerate}
		\item A set of problems potentially solvable by algorithms developed in the framework.
		These problems include, but not limited to, the risk-averse variance and mean-variance optimizations in discounted and average MDPs, and these four metrics can be covered by \eqref{obj}.
		When the discount factor $ \alpha \uparrow 1 $, the problem turns into the mean-variance maximization in average MDPs~\citep{Xia2020,Gosavi2014} (see Remark~\ref{remark1}).
		When the risk-aversion parameter $ \beta $ is large enough with respect to the mean, the problem degrades to the variance minimization problem (for average MDPs, see \citep{XIA2016}).
		\item For the unified set of problems, a set of algorithms can be developed and analyzed in the framework, such as the policy gradients~\citep{Prashanth2013NIPS,Bisi2020ijcai}, the policy iterations~\citep{XIA2016,Xia2020,Zhang2021}, and the value iteration~\citep{Gosavi2014}.
		The missing convergence analyses in some previous works can be developed as well.
		Moreover, both standard and optimistic versions of the DP algorithms can be studied with deliberations on their convergences.
	\end{enumerate}
	%   In short, the framework unifies a variety of methods for problems such as the unified variance-related problems.
	
\end{remark}

{ \color{black}
	\begin{remark}[Convergence rate and complexity] 
		In the bilevel optimization framework, the inner problem is a standard MDP for a given pseudo mean $ \lambda $.
		The convergence rate relies on the solver to the inner problem.
		For example, the convergence of the value iteration is linear at rate $ \beta $.
		However, since the mean value function $ v $ is different from the mean-variance value function $ u $, the convergence rate of $ \lambda $ cannot be analyzed similarly.
		The complexity of an algorithm in the bilevel optimization framework depends as well.
		Taking value iteration for example, the complexity for each iteration is $ \mathcal{O}(|S|^2|A|) $.
		A lower bound for the number of iterations needed can be estimated with an error bound at the $ n $-th iteration.
		For any $ d \in \mathop{\arg \max}_{d \in D} \{ \mathbf{f_{\lambda, d}} + \beta \mathbf{P_d} \mathbf{u_{\lambda, n}} \}$, where $ \mathbf{u_{\lambda, n}} $ is the pseudo mean-variance value function at $ n $-th iteration, we have 
		\begin{equation*}
			\| \mathbf{u^d_{\lambda}} - \mathbf{u^*_\lambda} \| \leq \frac{2 \beta^{n-1}}{1 - \beta} \| \max_{d \in D} \{ \mathbf{f_{\lambda, d}} + \beta \mathbf{P_d} \mathbf{u_{\lambda, 1}} \} - \mathbf{u_{\lambda, 1}} \|, 
		\end{equation*}
		where $ \mathbf{u^d_{\lambda}} $ is the pseudo mean-variance value function under $ d $, and $ \mathbf{u_{\lambda, 1}} $ is the initial pseudo mean-variance value function~\citep{Puterman1994a}.
		To seek $ \epsilon $-optimal policies, we have 
		\begin{align*}
			&\frac{2 \beta^{n-1}}{1 - \beta} \| \max_{d \in D} \{ \mathbf{f_{\lambda, d}} + \beta \mathbf{P_d} \mathbf{u_{\lambda, 1}} \} - \mathbf{u_{\lambda, 1}} \| \leq \epsilon, \\
			\Leftrightarrow & n \geq \log_{\beta} \left \{ 
			\frac{\epsilon (1 - \beta) }{2 \| \max_{d \in D} \{ \mathbf{f_{\lambda, d}} + \beta \mathbf{P_d} \mathbf{u_{\lambda, 1}} \} - \mathbf{u_{\lambda, 1}} \|} 
			\right \} + 1.
		\end{align*}
		
	\end{remark}
}

\subsection{Discounted mean-variance value iteration}
In this subsection, we develop a discounted mean-variance value iteration (DMVVI) in the proposed framework as an example.
We show the relationship between the original problem and the one with a pseudo mean.
We prove the local convergence of the DMVVI with a Bellman local-optimality equation, which is a necessary and sufficient condition for local optimality of a policy.
We believe that the DMVVI algorithm provides a foundation for model-free RL methods, such as Q-learning and SARSA, to the variance-related optimizations.

Equation~\eqref{pseudoVarOpt1}  %which implies that by introducing the pseudo mean, the optimization over the policy space ($ \max_{d \in D} $) can now be disassembled into optimizations over action spaces for states ($ \max_{a \in A(x)} $), i.e., since the inner problem refers to a standard MDP, we can divide and conquer it with traditional DP methods.
is the foundation for a value iteration to the risk-averse variance-related optimization.
%   With these equivalences, variants of algorithms can be derived for some discounted/average variance-related optimization problems.
%For the discounted mean-variance problem, we propose the DMVVI and prove its local convergence.
By further extending \eqref{pseudoVarOpt1}, we derive an optimality equation as  %applied in the DMVVI %in Step 1 of the framework
\begin{equation}
	\label{pseudoVarOpt2}
	\xi^* =   \max_{\lambda \in \mathbb{R}} \Big\{ \langle (\max_{a \in A(x)}    \{  (1 - \alpha) f_\lambda (x,a) + \alpha \sum_{y \in S} p(y \mid x,a)u^*_\lambda(y)      \} )^T_{x \in S} , \boldsymbol{\mu}^T \rangle \Big\}.
\end{equation}
%   \begin{equation}
%       \label{pseudoVarOpt2}
%       \begin{aligned}
%           \xi^* =   \max_{\lambda \in \mathbb{R}} \Big\{ \langle (\max_{a \in A(x)}    \{  (1 - \alpha) [r(x, a) & - \beta (r(x, a) - \lambda)^2] + \\
%           & \alpha \sum_{y \in S} p(y \mid x,a)u^*_\lambda(y)      \} )^T_{x \in S} , \boldsymbol{\mu}^T \rangle \Big\}.
%       \end{aligned}
%   \end{equation}
%\begin{equation}
%   \mathbf{w}^* \coloneqq \min_{\lambda \in \mathbb{R}} \min_{d \in D} \left\{ \mathbf{w}_\lambda \right\} = \min_{\lambda \in \mathbb{R}} \min_{d \in D} \left\{ (1 - \alpha)  [\mathbf{f}_{\lambda}^2 + \alpha \mathbf{P}\mathbf{w}^*] \right\}.
%\end{equation} % min order?         vectorized order -- MDP book
Equation~\eqref{pseudoVarOpt2} forms a Bellman optimality equation parameterized by $ \lambda $ for the inner standard MDP with a fixed $ \lambda $.
To improve the discounted mean-variance value function and evaluate the mean simultaneously, we maintain two value functions in the iteration: the discounted mean value function $ v $ and the second moment value function $ w $.
Given a policy $ d \in D $, we define the value function updates with two Bellman operators:
\begin{equation} \nonumber
	%       \label{pseudoVarOpt2}
	\mathbf{v}' = \mathcal{T}_{v,d} \mathbf{v} \coloneqq (1-\alpha) \mathbf{r} + \alpha \mathbf{P} \mathbf{v}
\end{equation}
and
\begin{equation} \nonumber
	%       \label{pseudoVarOpt2}
	\mathbf{w}' = \mathcal{T}_{w,d} \mathbf{w} \coloneqq (1-\alpha) (\mathbf{r})^2_\odot + \alpha \mathbf{P} \mathbf{w}.
\end{equation}
Now we give the value iteration for the risk-averse discounted mean-variance optimization in Algorithm~\ref{DMVVI}, which is a detailed description of the standard value iteration in Algorithm~\ref{InnerVI} in the framework.

\newif\ifboldnumber
\newcommand{\boldnext}{\global\boldnumbertrue}

% Default definition is \footnotesize#1:
\algrenewcommand\alglinenumber[1]{%
	\footnotesize\ifboldnumber\bfseries\fi\global\boldnumberfalse#1:}

\renewcommand{\algorithmicrequire}{\textbf{Input:}}
\begin{algorithm}[!h]
	\caption{The discounted mean-variance value iteration (DMVVI)}
	\label{DMVVI}
	\begin{algorithmic}[1]
		
		\Require
		The MDP $ \mathcal{M} $;
		a small threshold $ \theta > 0$;
		% determining estimation accuracy of value function;
		initialize $ \lambda, \lambda' \in \mathbb{R} $ with $ \lambda \neq \lambda' $, two discounted mean value functions, two second moment value functions and a pseudo mean-variance value function $ \mathbf{v}, \mathbf{v}', \mathbf{w}, \mathbf{w}', \mathbf{u}_{\lambda} \in \mathbb{R}^{|S|} $, with $   \| \mathbf{u}_{\lambda} - [\mathbf{v}' - \beta (\mathbf{w}'  - 2 \lambda \mathbf{v}' + \lambda^{2} \mathbf{e} )] \| > \theta $
		\Ensure
		Local optimal policy $ d $ and the local optimum $ \xi $
		\boldnext
		\While {$ \lambda' \neq \lambda $}
		\boldnext
		\State $ \lambda \leftarrow \lambda' $
		\boldnext
		\While {$   \| \mathbf{u}_{\lambda} - [\mathbf{v}' - \beta (\mathbf{w}'  - 2 \lambda \mathbf{v}' + \lambda^{2} \mathbf{e} )] \| > \theta $  \bf{ or } $ \left \| \mathbf{v}' - \mathbf{v} \right \| > \theta $}
		\State $ \mathbf{v} \leftarrow  \mathbf{v}' $
		\State $ \mathbf{w} \leftarrow  \mathbf{w}' $
		\State $ \mathbf{u}_{\lambda} \leftarrow \mathbf{v} - \beta (\mathbf{w}  - 2 \lambda \mathbf{v} + \lambda^2 \mathbf{e} ) $
		\For{$ x \in S $}
		\State
		\begin{equation} \nonumber
			d(x) \in \mathop{\arg \max}_{a \in A(x)}    \left \{ (1-\alpha) f_{\lambda}(x,a) + \alpha \sum_{y \in S} p(y \mid x,a)u_{\lambda}(y)     \right \}
		\end{equation}
		\EndFor
		
		\State $ \mathbf{v}' \leftarrow \mathcal{T}_{v,d} \mathbf{v} $
		\State $ \mathbf{w}' \leftarrow \mathcal{T}_{w,d} \mathbf{w} $
		
		\EndWhile
		
		\State $ \lambda' \leftarrow \boldsymbol{\mu} \mathbf{v}' $
		\EndWhile
		
		\State $ \xi = \boldsymbol{\mu} \mathbf{u}_{\lambda} $
		
	\end{algorithmic}
\end{algorithm}

Although the problem with a pseudo mean is different from the original problem, we have the following theorem to relate these two problems in an iterative algorithm.
\begin{theorem}[Relationship between the two problems]
	\label{2problems}
	For a fixed pseudo mean-variance value function $ u_{\lambda} $, compute the pseudo mean-variance $ \xi_{\lambda} $, the policy $ d \in D $ and $ \eta = \eta_{d} $, and then set $ \lambda = \eta $.
	In the next iteration, if we have $ \xi'_{\lambda} \geq \xi_{\lambda} $, then we have $ \xi' \geq \xi $.
	We have $ \xi' > \xi $ if the first inequality strictly holds.
\end{theorem}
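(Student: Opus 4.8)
The plan is to reduce everything to Lemma~\ref{lemma2}, the identity $\xi_\lambda = \xi - \beta(\eta - \lambda)^2$, and to watch carefully which policy and which value of $\lambda$ feed into it at each stage. First I would fix notation: let $d$ be the current policy with discounted mean $\eta = \eta_d$ and mean-variance $\xi = \xi_d$, let $d'$ be the policy obtained in the next iteration with mean $\eta' = \eta_{d'}$ and mean-variance $\xi' = \xi_{d'}$, and emphasize that after the assignment $\lambda = \eta$ the pseudo mean is frozen at this common value for both evaluations, so that $\xi_\lambda = \xi_{\lambda,d}$ and $\xi'_\lambda = \xi_{\lambda,d'}$ refer to the same $\lambda = \eta$.

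The crux is a one-sided collapse of the penalty term. Applying Lemma~\ref{lemma2} to the current policy $d$ with the tuned value $\lambda = \eta$, the square $\beta(\eta - \lambda)^2$ vanishes, so $\xi_\lambda = \xi$: for the very policy whose mean defined $\lambda$, the pseudo objective coincides with the true objective. Applying the same lemma to the next policy $d'$ but keeping the same $\lambda = \eta$, which in general differs from $\eta'$, gives $\xi' = \xi'_\lambda + \beta(\eta' - \lambda)^2$; since $\beta > 0$ and a square is nonnegative, this yields the one-sided bound $\xi' \geq \xi'_\lambda$.

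It then remains only to chain the pieces. Under the hypothesis $\xi'_\lambda \geq \xi_\lambda$ I would write $\xi' \geq \xi'_\lambda \geq \xi_\lambda = \xi$, which is the asserted inequality $\xi' \geq \xi$; and if the hypothesis holds strictly, the middle step becomes $\xi' \geq \xi'_\lambda > \xi_\lambda = \xi$, delivering $\xi' > \xi$.

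I do not expect a genuine obstacle here, since the argument is three invocations of Lemma~\ref{lemma2} glued by the nonnegativity of the penalty. The only point demanding care---and the conceptual heart of the result---is the asymmetry of the penalty term: it is exactly zero for $d$ because $\lambda$ was set to $\eta_d$, but strictly positive in general for $d'$ because $\eta_{d'} \neq \lambda$. This asymmetry is what transfers an improvement in the surrogate pseudo mean-variance into a genuine improvement of the true mean-variance, and it is the mechanism that will later justify the monotone progress of the DMVVI iterates toward a local optimum.
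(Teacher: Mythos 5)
Your proposal is correct and follows essentially the same route as the paper: both proofs apply Lemma~\ref{lemma2} to the two policies at the common value $\lambda = \eta$, observe that the penalty $\beta(\eta - \lambda)^2$ vanishes for $d$ while the term $\beta(\eta' - \lambda)^2$ is nonnegative for $d'$, and conclude $\xi' - \xi \geq \xi'_\lambda - \xi_\lambda$. The paper states this as a single difference identity $\xi' - \xi = \xi'_\lambda - \xi_\lambda + \beta(\eta - \eta')^2$ where you present it as a chain of inequalities, but the content is identical.
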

%\proof{Proof of Theorem \ref{2problems}.}
\begin{proof}
	With Lemma~\ref{lemma2}, we have
	\begin{equation} \nonumber
		\label{tmp4}
		\xi' - \xi = [\xi'_{\lambda} + \beta (\eta' - \lambda )^2] - [\xi_{\lambda} + \beta (\eta - \lambda )^2].
	\end{equation}
	By setting $ \lambda = \eta $, we have
	\begin{equation} \nonumber
		\label{tmp5}
		\xi' - \xi = \xi'_{\lambda}  - \xi_{\lambda} + \beta (\eta - \eta' )^2.
	\end{equation}
	Therefore, if $ \xi'_{\lambda} \geq (>) \xi_{\lambda} $, we have $ \xi' \geq (>) \xi $.
\end{proof}

%With Theorem~\ref{2problems}, we can see that the original problem is optimized as the problem with a pseudo mean is optimized.
Though the error term in Lemma~\ref{lemma2} can gracefully handle the policy dependency of the variance metric, it takes a toll as well.
Next, we show the condition that the global optimum cannot be reached in one iteration of the DMVVI.
\begin{lemma}[Unreachability of global optimum in one iteration]
	\label{Unreachability}
	In one outer iteration of the DMVVI, ignoring the estimation error, given the current discounted mean-variance $ \xi $ with the discounted mean $ \eta $, %under a policy $ d $,
	and the global optimum $ \xi^* $ with
	\begin{equation}
		\label{tmp6}
		%		\eta^* \in \{ \eta_{ d^* } \mid d^* \in \mathop{\arg \min}_{d \in D^*} \{ |\eta_{ d } - \eta | \}\},
		\eta^* = \eta_{ d^* }, \qquad d^* \in \mathop{\arg \min}_{d \in D^*} \{ |\eta_{ d } - \eta | \},
	\end{equation}
	%   $  \eta^*_\mu $ under one global optimal policy $ d^* \in D^* $,
	where $  D^* $ is the set of global optimal policies, if
	\begin{equation}
		\label{eq:Unreachability}
		(\xi^* - \xi) \leq \beta (  \eta^* - \eta)^2,
	\end{equation}
	the global optimum cannot be reached in the current iteration.
\end{lemma}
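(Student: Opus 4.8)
The plan is to argue by contradiction, using Lemma~\ref{lemma2} to pass between the genuine objective $\xi$ and the pseudo objective $\xi_\lambda$. Recall that one outer iteration of the DMVVI (Algorithm~\ref{DMVVI}) freezes the pseudo mean at the current discounted mean, $\lambda = \eta$, and then solves the inner standard MDP $\mathcal{M}_\lambda$ to optimality, returning a policy $d'$ that maximizes $\xi_\lambda$ over $D$ together with the updated mean $\eta' = \boldsymbol{\mu}\mathbf{v}'$. ``Reaching the global optimum in the current iteration'' means precisely that this returned $d'$ is globally optimal, i.e.\ $d' \in D^*$ and $\xi' = \xi_{d'} = \xi^*$. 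So I would suppose $d' \in D^*$ and derive a contradiction with the hypothesis \eqref{eq:Unreachability}.

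First I would record what setting $\lambda = \eta$ does to the pseudo objective. By Lemma~\ref{lemma2} the current policy satisfies $\xi_\lambda = \xi - \beta(\eta-\lambda)^2 = \xi$, so since $d'$ maximizes the pseudo objective over all of $D$ and the current policy is feasible, $\xi_{\lambda,d'} \ge \xi$. Next, any globally optimal policy has genuine objective $\xi^*$, so applying Lemma~\ref{lemma2} once more gives $\xi_{\lambda,d'} = \xi^* - \beta(\eta_{d'}-\eta)^2$. The definition of $d^*$ in \eqref{tmp6} as the element of $D^*$ closest in mean to $\eta$ is exactly what makes $d^*$ the most ``visible'' optimal policy to the inner solver: $(\eta_{d'}-\eta)^2 \ge (\eta^*-\eta)^2$, hence $\xi_{\lambda,d'} \le \xi^* - \beta(\eta^*-\eta)^2$. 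Chaining these bounds yields $\xi \le \xi_{\lambda,d'} \le \xi^* - \beta(\eta^*-\eta)^2$, i.e.\ $\xi^* - \xi \ge \beta(\eta^*-\eta)^2$, which contradicts \eqref{eq:Unreachability}. Intuitively, the hypothesis says that after shifting the baseline to $\lambda=\eta$ the entire optimal set $D^*$ is dominated by the current policy in the pseudo objective, so a maximizer of $\xi_\lambda$ cannot be globally optimal.

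The delicate point, and the main obstacle, is the boundary of the inequality. The contradiction above is clean when \eqref{eq:Unreachability} is strict; when it holds with equality, the chain forces $\xi = \xi_{\lambda,d'} = \xi^* - \beta(\eta^*-\eta)^2$, so a tie occurs and the inner maximizer could coincide with $d^*$. I would address this either by reading ``cannot be reached'' as ruling out a strict jump to the optimum---the current iterate already attains the same pseudo value, so it is an equally valid maximizer and the solver has no incentive to move to $D^*$---or by adopting the natural tie-breaking convention that the inner optimization retains the incumbent policy when the pseudo objective is not strictly improved. Either way the conclusion is $\xi' < \xi^*$, and the substantive content---that introducing the pseudo-mean baseline can hide the optimal set from a single inner solve---is captured entirely by the two applications of Lemma~\ref{lemma2} together with the optimality of $d'$ for $\mathcal{M}_\lambda$.
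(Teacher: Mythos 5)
Your proof is correct, but it takes a genuinely different route from the paper's. The paper applies Lemma~\ref{lemma2} once to rewrite the hypothesis \eqref{eq:Unreachability} as $\xi^*_{\eta} - \xi \leq 0$, then expands this difference with the pseudo mean-variance PDF, $\xi^*_{\eta} - \xi = \boldsymbol{\mu}(\mathbf{I} - \alpha\mathbf{P}^*)^{-1}[(1-\alpha)(\mathbf{f}^*_{\eta} - \mathbf{f}) + \alpha(\mathbf{P}^* - \mathbf{P})\mathbf{u}]$, and uses nonnegativity of $\boldsymbol{\mu}(\mathbf{I} - \alpha\mathbf{P}^*)^{-1}$ to exhibit a specific state $x$ at which the Bellman backup under $d^*$ is dominated by the greedy backup; the conclusion is phrased as the inner value iteration failing to converge to the value function of any $d^* \in D^*$. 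You instead argue by contradiction entirely at the scalar level: two applications of Lemma~\ref{lemma2} (one giving $\xi_{\lambda,d} = \xi$ at $\lambda = \eta$, one giving $\xi_{\lambda,d'} = \xi^* - \beta(\eta_{d'}-\eta)^2$ for a putative optimal output $d'$), combined with exact optimality of the inner solve and the minimality in \eqref{tmp6}, yield $\xi^* - \xi \geq \beta(\eta^*-\eta)^2$. Your version is more elementary --- it dispenses with the PDF and the state-wise analysis entirely --- while the paper's state-wise formulation is what later feeds into the Bellman local-optimality equation \eqref{localOpt} and the convergence argument of Theorem~\ref{Local convergence}, so each buys something: yours a shorter self-contained contradiction, the paper's a localization of where the update fails. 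Note also that the boundary case you flag (equality in \eqref{eq:Unreachability}, where a tie permits the inner maximizer to coincide with $d^*$) is a genuine weak spot of the lemma and is present in the paper's proof as well, since its inequality chain is likewise non-strict; your explicit tie-breaking discussion is, if anything, more careful than the original. Your use of the exact-maximizer property of the inner solve is legitimately covered by the lemma's ``ignoring the estimation error'' clause.
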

%\proof{Proof of Lemma \ref{Unreachability}.}
\begin{proof}
	For the current discounted mean-variance function $ u $, we have the transition probability matrix $ \mathbf{P} $ and the discounted mean-variance reward function $ f $.
	Denote $ \xi_{\eta}^* $ the pseudo discounted mean-variance under a global optimal policy $ d^* \in D^* $, with $ \mathbf{P}^* $ and $ f^*_{\eta} $.
	From Lemma~\ref{lemma2}, we have
	\begin{align*}
		(\xi^* - \xi) - \beta (  \eta^* - \eta)^2 
		&= \xi_{\eta}^* - \xi \notag \\
		&= \boldsymbol{\mu} (\mathbf{I} - \alpha \mathbf{P}^*)^{-1}  [(1 - \alpha) (\mathbf{f}^*_{\eta} - \mathbf{f}) + \alpha (\mathbf{P}^* - \mathbf{P}) \mathbf{u}]  \leq 0.
		%\label{pseudoVarOpt1}.
	\end{align*}
	Since each entry of $ \boldsymbol{\mu} (\mathbf{I} - \alpha \mathbf{P}^*)^{-1} $ is nonnegative, there exists at least one $ x \in S $, such that
	\begin{equation*} %align*
    \begin{aligned}
    	& [(1 - \alpha) \mathbf{f}^*_{\eta} + \alpha \mathbf{P}^*\mathbf{u}](x) 
    	\leq [(1 - \alpha) \mathbf{f} + \alpha \mathbf{P} \mathbf{u}](x) \\
    	& \quad \leq \max_{a \in A(x)}  \left \{  (1 - \alpha) [r(x, a) - \beta (r(x, a) - \eta)^2] + \alpha \sum_{y \in S} p(y \mid x,a)u(y) \right \}, 
    \end{aligned}		 
		%\label{pseudoVarOpt1}.
	\end{equation*}
	which means that, in this iteration, the value function will not converge to the optimal value function under any $ d^* \in D^* $, so the global optimum cannot be reached in the current iteration. 
\end{proof}

Lemma~\ref{Unreachability} claims that, %in one iteration with ,
%   if $ \eta $ is relatively far (Equation~\ref{eq:Unreachability}) from the average of any optimal policy (Equation~\ref{tmp6}),
given $ \xi $, $ \eta $, and the optimal policy $ d^* $ whose discounted mean performance $ \eta^* $ is closest to $ \eta $ (see \eqref{tmp6}) in the current iteration, if the difference between $ \eta $ and $ \eta^* $ is relatively large (see \eqref{eq:Unreachability}), then
the pseudo mean-variance under $ d^* $ will deteriorate to $ \xi_{\eta}^* \leq \xi $ because of $ \eta $, and the optimum cannot be reached in this iteration.
With the aid of Lemma~\ref{Unreachability}, we can prove the local convergence of the DMVVI.
{
	\color{black}
	%       Nevertheless, DMVVI is still guaranteed to converge to a local optimum in the randomized policy space.
	We first give the definition of local optimality for the discounted mean-variance optimization, along with the Bellman local-optimality equation.
	\begin{definition}[Local optimality]
		For a policy $d \in D$, if there exists $ \Delta \in (0, 1)$, we always have $\xi_{d} \geq \xi_{d^{\delta, d^\prime}}$ for any $\delta \in (0, \Delta)$, then we say $d$ is a local optimum in the mixed policy space.
	\end{definition}
}
\begin{definition}[Bellman local-optimality equation]
	A policy $d^\# \in D$ is a local optimal policy if and only if its value function $ u^\# $ satisfies the Bellman local-optimality equation
	\begin{equation}
		u^\#(x) = \max_{a \in A(x)}  \left\{  (1 - \alpha) [r(x, a) - \beta (r(x, a) - \eta_{d^\#})^2]
		+ \alpha \sum_{y \in S} p(y  \mid  x,a)u^\#(y) \right\}.
		\label{localOpt}
	\end{equation}
	
\end{definition}

The local convergence of DMVVI is established with the performance derivative formula \eqref{performance derivative formula}, which shows that the converged value function has a nonpositive gradient in any feasible directions.
Next, we give the local convergence proof of the DMVVI.

\begin{theorem}[Local convergence of DMVVI]
	\label{Local convergence}
	The DMVVI converges to a local optimum.
\end{theorem}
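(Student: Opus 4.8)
The plan is to split the argument along the bilevel structure of Algorithm~\ref{DMVVI} into a termination part and a local-optimality part. Write $d_k$ for the policy returned by the inner loop in the $k$-th outer iteration and $\lambda_k$ for the pseudo mean used there. Since the update $\lambda' \leftarrow \boldsymbol{\mu}\mathbf{v}'$ sets the next pseudo mean to the true discounted mean of the current policy, each inner solve is carried out with $\lambda_k = \eta_{d_{k-1}}$, i.e.\ the pseudo mean equals the true discounted mean of the previous policy.

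First I would establish monotone improvement of the \emph{true} objective $\xi_{d_k}$. Because the inner value iteration drives $\mathcal{M}_{\lambda_k}$ to optimality, $\xi_{\lambda_k,d_k}\ge \xi_{\lambda_k,d_{k-1}}$; applying Lemma~\ref{lemma2} to both sides and using $\lambda_k=\eta_{d_{k-1}}$ to annihilate the error term on the right gives $\xi_{d_k}\ge \xi_{d_{k-1}}+\beta(\eta_{d_k}-\eta_{d_{k-1}})^2$, which is exactly Theorem~\ref{2problems}. Hence $\{\xi_{d_k}\}$ is nondecreasing and, because $D$ is finite, it attains only finitely many values and must become constant. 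The moment $\xi_{d_k}=\xi_{d_{k-1}}$, the improvement inequality forces $\eta_{d_k}=\eta_{d_{k-1}}$, i.e.\ $\lambda'=\lambda$, which is the stopping rule. Thus the DMVVI terminates at some $d^\#$ with stabilized pseudo mean $\lambda=\eta_{d^\#}$.

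Next I would show this terminal $d^\#$ is a local optimum. Because $\lambda=\eta_{d^\#}$, the pseudo reward $f_\lambda$ equals the genuine mean-variance reward $f$ and $\mathbf{u}_\lambda=\mathbf{u}^\#$; the converged inner iteration then says $\mathbf{u}^\#$ solves the Bellman optimality equation of $\mathcal{M}_{\eta_{d^\#}}$, which is verbatim the Bellman local-optimality equation \eqref{localOpt}. Equivalently, for every competitor $d'$ the greedy inequality $(1-\alpha)\mathbf{f}'_\lambda+\alpha\mathbf{P}'\mathbf{u}^\#\le \mathbf{u}^\#$ holds entrywise, so the bracketed vector $\mathbf{g}\coloneqq (1-\alpha)(\mathbf{f}'_\lambda-\mathbf{f}_\lambda)+\alpha(\mathbf{P}'-\mathbf{P})\mathbf{u}_\lambda$ in \eqref{performance derivative formula} satisfies $\mathbf{g}\le\mathbf{0}$. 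Substituting $\lambda=\eta=\eta_{d^\#}$ into \eqref{performance derivative formula} kills the term $2\beta(\eta-\lambda)\frac{\mathrm{d}\eta}{\mathrm{d}\delta}$, leaving $\frac{\mathrm{d}\xi}{\mathrm{d}\delta}\big|_{\delta=0}=\boldsymbol{\mu}(\mathbf{I}-\alpha\mathbf{P})^{-1}\mathbf{g}\le 0$, since $\boldsymbol{\mu}(\mathbf{I}-\alpha\mathbf{P})^{-1}=\boldsymbol{\mu}\sum_{t\ge1}\alpha^{t-1}\mathbf{P}^{t-1}$ has nonnegative entries. As $d'$ was arbitrary, no feasible ascent direction exists and $d^\#$ is a local optimum in the mixed policy space.

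The main obstacle is the error term $\beta(\eta-\lambda)^2$ introduced by the pseudo mean, the same ``toll'' exposed in Lemma~\ref{lemma2} and Lemma~\ref{Unreachability}: in the exact mixed-policy difference $\xi^\delta-\xi = \delta\,\boldsymbol{\mu}(\mathbf{I}-\alpha\mathbf{P}^\delta)^{-1}\mathbf{g}+\beta(\eta^\delta-\eta)^2$ this term is nonnegative and so pushes against a maximum. The point to make rigorous is that at the terminal policy $\lambda=\eta$ makes $\beta(\eta^\delta-\eta)^2$ of order $\delta^2$ (as $\eta^\delta$ is smooth in $\delta$), so it drops out of the first-order derivative and cannot overturn a strictly negative first-order term for small $\delta$. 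The delicate sub-case is a direction with zero directional derivative: since $\mathbf{g}\le\mathbf{0}$ while $\boldsymbol{\mu}(\mathbf{I}-\alpha\mathbf{P})^{-1}$ carries positive mass, a vanishing derivative forces $\mathbf{g}=\mathbf{0}$, so that direction produces no first-order change and local optimality is certified directly by \eqref{localOpt}. Reconciling \eqref{localOpt} with the mixed-policy definition of local optimality in this degenerate case is the crux that the derivative formula is engineered to resolve.
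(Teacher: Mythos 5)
Your proof is correct and follows essentially the same two-part route as the paper's: termination via the monotone improvement $\xi_{d_k}\ge\xi_{d_{k-1}}+\beta(\eta_{d_k}-\eta_{d_{k-1}})^2$ (Lemma~\ref{lemma2}, i.e.\ Theorem~\ref{2problems}) combined with finiteness of $D$, and local optimality by plugging $\lambda=\eta_{d^\#}$ into the derivative formula \eqref{performance derivative formula} and using the nonnegativity of $\boldsymbol{\mu}(\mathbf{I}-\alpha\mathbf{P})^{-1}$. You are in fact slightly more explicit than the paper at two points it leaves implicit---deriving the entrywise bound $\mathbf{g}\le\mathbf{0}$ from the greedy inequality of the converged inner iteration satisfying \eqref{localOpt}, and observing that the error term $\beta(\eta^\delta-\eta)^2$ is of order $\delta^2$ and therefore drops out of the first-order derivative.
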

%\proof{Proof of Theorem \ref{Local convergence}.}
\begin{proof}
	First, we prove the convergence of the DMVVI. %algorithm, which applies the value iteration to the inner optimization problem in the two-level algorithm framework.
	At $ t $-th step of the outer iteration, we have a pseudo mean $ \lambda_t \in \mathbb{R} $ and a discounted mean-variance $ \boldsymbol{\mu} \mathbf{u}_{\lambda_t, t} $ (i.e., $ \xi_{\lambda_{t}, t} $) at the beginning of the inner optimization (at Line~3 in Algorithm~\ref{DMVVI}).
	After one inner optimization, we have $ \boldsymbol{\mu} \mathbf{u}_{\lambda_t, t+1} \geq \boldsymbol{\mu} \mathbf{u}_{\lambda_t, t} $ (i.e., $ \xi_{\lambda_{t}, t+1} \geq \xi_{\lambda_{t}, t} $, at Line~1), which is guaranteed by the convergence of the standard value iteration~\citep{Puterman1994a}.
	Derive $ \lambda_{t+1}  $ (at Line~2) and then we have $ \boldsymbol{\mu} \mathbf{u}_{\lambda_{t+1}, t+1} \geq \boldsymbol{\mu} \mathbf{u}_{\lambda_t, t+1} $ (i.e., $ \xi_{\lambda_{t+1}, t+1} \geq \xi_{\lambda_t, t+1} $) based on Lemma~\ref{lemma2}, which strictly holds if $ \lambda_{t+1} \neq \lambda_{t} $.
	%Moreover, if $ \lambda_{t+1} \neq \lambda_{t} $, we have $ d_{t} \neq d_{t+1} $, and the newly developed policy is strictly improved.
	Since the policy space $ D $ is finite and $ \xi_{\lambda_{t+1}, t+1} \geq \xi_{\lambda_t, t+1} \geq \xi_{\lambda_t, t}$, this algorithm will stop after a finite number of iterations. Thus, the convergence of the DMVVI is proved.
	
	{
		\color{black}
		Second, we prove that the DMVVI converges to a local optimum.
		From Lemma~\ref{Unreachability}, the DMVVI converges to the current discounted mean-variance $ \xi^\# $ if it is local optimum, i.e., it satisfies the Bellman local-optimality equation \eqref{localOpt}.
		%    where $ u^\# $ is the current discounted mean-variance value function, and $ d^\# $ can be derived from $ u^\# $. %with
		%    \begin{equation} \nonumber
		%        (1-\alpha) f_{d^\#}(x) + \alpha \sum_{y \in S} p(y \mid x,d^\#(x)) u^\#(y).
		%    \end{equation}
		Plugging $\lambda = \eta_{d^\#}$ into \eqref{performance derivative formula}, we derive that
		\begin{equation} \nonumber
			\frac{ \mathrm{d} \xi_{d^\#}}{  \mathrm{d} \delta} = \boldsymbol{\mu} (\mathbf{I} - \alpha \mathbf{P^\#})^{-1}  [ (1 - \alpha) (\mathbf{f}^{\# '} - \mathbf{f}^\#) + \alpha (\mathbf{P}' - \mathbf{P}) \mathbf{u}^\# ]. %, \prime
		\end{equation}
		Noticing that the elements of $\boldsymbol{\mu} (\mathbf{I} - \alpha \mathbf{P^\#})^{-1}$ are always nonnegative, we conclude that $\frac{ \mathrm{d} \xi_{d^\#}}{  \mathrm{d} \delta} \leq 0$ along any feasible changing direction, indicating that $d^\#$ is a local optimum in the mixed policy space.  
	}	
\end{proof}

Comparing with the standard value iteration, whose global convergence is guaranteed by \eqref{standardVI}, Theorem~\ref{Local convergence} explains why the DMVVI converges a local optimum---within one iteration, the value function update always depends on the former discounted mean, and the error term in Lemma~\ref{lemma2} suppresses the global policy in that iteration (Lemma~\ref{Unreachability}).
Since the optimization procedure is deterministic, the sequence of $ \lambda $'s depends on the initial $ \lambda $ only.
If a pseudo mean derived from a local optimal policy is reached before one from a global optimal policy, then the DMVVI will converge to this local optimum, and in this case, the iteration is ``trapped'' by the Bellman local-optimality equation \eqref{localOpt}.
This local convergence analysis can be generalized to other algorithm variants governed by the unified algorithm framework, such as the works in~\citep{XIA2016,Xia2020,Zhang2021}.
Besides, if all policies share the same discounted mean, the error term vanishes, and the DMVVI will converge to the global optimum.

By introducing a pseudo mean, the PDF quantifies the performance difference between any two policies and provides a foundation for iterative algorithms.
%For the risk-averse variance optimization in average MDPs, \cite{XIA2016} proposes a policy iteration based on the PDF, and later extends it for the mean-variance optimization~\citep{Xia2020}.
As shown in the introduction section, most of the relevant works focus on gradient-based methods, %to solve variance-related problems.
and to the best of our knowledge, only two works concern iterative algorithms for variance-related problems besides \citep{XIA2016,Xia2020}.
One is \citep{Zhang2021}, which reformulates the mean-variance formulation in average MDPs with its Legendre-Fenchel dual, and derives a similar problem formulation as the one with a pseudo mean.
The authors propose a stochastic block coordinate ascent algorithm \citep{Cui18}, which can be unified as a policy iteration in our unified algorithm framework.
No local convergence analysis is given in this work.
The other is \citep{Gosavi2014}, where a value iteration is proposed for the mean-variance optimization in average MDPs.
However, relevant algorithm analyses, such as convergence and local optimality, are circumvented with assumptions.
%Besides the variance-related risk measures, there are some other risk-sensitive criteria, such as the conditional value-at-risk, that can be potentially solved by algorithms in the proposed framework.
%The conditional value-at-risk can be considered as a special case of the optimized certainty equivalent~\citep{Ben07}, whose problem formulation are in a two-level structure as well.
%For the optimized certainty equivalent, \cite{Huang2021} develop a stochastic approximation method with a similar two-level structure.
%Though the local optimality is circumvented in a probabilistic way in the RL framework, a convergence analysis as a prerequisite is missing in the MDP framework.
We believe that our work complements risk-sensitive optimization in MDPs from three aspects:
\begin{enumerate}
	\item The mean-variance optimization theory is extended to discounted MDPs;
	\item A unified algorithm framework is proposed, where a variety of algorithm variants can be unified and analyzed with the aid of PDF, and the framework works for a collection of risk-sensitive criteria including, but not limit to, several variance-related risk measures; and %which unifies a variety of algorithms for discounted/average variance minimization and mean-variance maximization problems; and
	\item The DMVVI is proposed with a convergence analysis and a Bellman local-optimality equation, and it provides a foundation for model-free RL methods, such as Q-learning and SARSA, to the variance-related optimizations.
\end{enumerate}

\section{Numerical experiment}

%In book MDP to fin., 2.6.2 A Cash Balance Problem (CBP).
%
%Eg should show that, if exp. is the obj., then the prob. of bankruptcy is higher, and when the variance is the goal, the prob. is lower.
%
%the example per se may not make sense in some degree due to the simplification or even the objective (could be mean-variance or something else).
%
%To CBP,
%
%
%Exercise 2.7.7 (Replacement Problem).
%
%4.3 Consumption and Investment Problems
%
%7.6.2 A Cash Balance Problem with Infinite Horizon
%
%Still, do not make too much sense in fin. without mean performance...
%
%Back to quality control. maintenance, reward is expected quality deviation, its mean can be compensated!

In this section, we validate the proposed DMVVI by solving the discounted mean-variance optimization in a portfolio management problem~\citep{Tamar2012policy}.
We assume the dynamics of portfolio management to be a stationary stochastic process and model it as an MDP with an appropriate discretization of all relevant continuous variables.

A portfolio is usually composed of two types of assets.
One is the liquid assets (e.g., short-term T-bills), each of which has a fixed interest rate $ r_l $ and can be sold at any epoch $ t \in \mathbb{N}^+$.
The other type is the non-liquid assets (e.g., low liquidity bonds or options), each of which can be sold only after a maturity period of $ M \in \mathbb{N}^+ $ steps with a time-dependent interest rate $ r_n(t) $. %({\color{red} notice that when $ M=1 $, it degrades to a cMAB problem.})
We assume that $ r_n(t) $ can take either $ r^{low}_n $ or $ r^{high}_n $, and the transitions between these two cases occur randomly with a switching probability $ p_s $.
In addition, a non-liquid asset has a default risk with probability $ p_r $. %(i.e., not being paid)
To simplify the problem, we assume that the portfolio has one for each type of assets.
Besides, we discretize the investor's total available cash into $ N \in \mathbb{N}^+ $ units, and represent a state by a vector $ x = (x_0, x_1, \cdots, x_{M+1}) \in \{ 0, \cdots, N \}^{M+1} \times \{ r^{low}_n, r^{high}_n \}$, where
$ x_0 \in \{ 0, \cdots, N \} $ is the number of units invested in the liquid asset;
$ x_1, \cdots, x_{M} \in \{ 0, \cdots, N \} $ are the numbers of units invested in the non-liquid assets with $ 0, \cdots, (M-1) $ %~\footnote{A decision is made and executed at an epoch, which corresponds to the beginning of a period. when an investment $ k \in \{ 0, cdots, x_0 + x_1 \} $ is made to the non-liquid asset ($ x_m $) at epoch $ t $, we have $ x_m = k $ at $ t+1 $ with period $ t $ elapsed. For details on definitions of epoch and period, see\~citep{puterman.}}
time steps to maturity, respectively; and
$ x_{M+1} \in \{ r^{low}_n, r^{high}_n \}$ records the current non-liquid interest rate.
At each epoch with a state $ x $, the investor may change her portfolio by investing a number of units $ a \in A(x) = \{0, 1, \cdots, (x_0+x_1)\} $ in the non-liquid asset. %, as shown in Figure~\ref{fig:portMan}. % of her total available cash in the non-liquid asset.
We further assume that default can happen only at the maturity epoch.
The dynamics of the investment among the liquid asset and the non-liquid assets with different maturity times is illustrated in Figure~\ref{fig:portMan}.
\begin{figure}[!h] 
	\centering
	{\includegraphics[width=0.6\columnwidth]{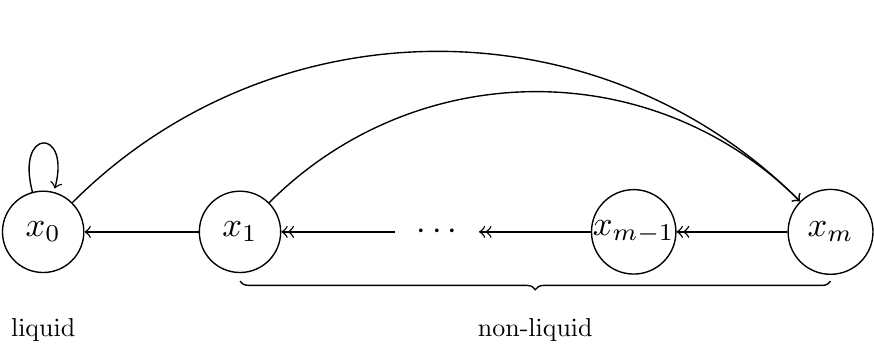}}
	\caption{Dynamics of investment among liquid asset and non-liquid assets with different maturity times. The arrow ``$ \rightarrow $'' represents a controllable investment, and ``$ \twoheadrightarrow $'' represents a uncontrollable state transition.
		Notice that the investment at $ x_1 $ can be directly reinvested to the non-liquid asset since it is matured at the decision epoch.}\label{fig:portMan}
\end{figure}
%\begin{figure}[!ht]
%	\centering
%	\caption{Dynamics of investment among liquid asset and non-liquid assets with different maturity times.
%		The arrow ``$ \rightarrow $'' represents a controllable investment, and ``$ \twoheadrightarrow $'' represents a uncontrollable state transition.
%		Notice that the investment at $ x_1 $ can be directly reinvested to the non-liquid asset since it is matured at the decision epoch.}
%	%   \vspace{-.5in}
%	\includegraphics[width=0.6\columnwidth]{PortMan.pdf}      \label{fig:portMan}
%\end{figure}

To consider a small-scale problem, we set
%   the discount factor $ \alpha = 0.95 $,
%   the risk-aversion parameter $ \beta  = 1 $,
%   the maturity period $ M = 3 $,
%   the total available cash units $ N = 3 $,
%   the liquid asset interest rate $ r_l = 0.01 $,
%   the low non-liquid asset interest rate $ r^{low}_n = 0.4 $,
%   the high non-liquid asset interest rate $ r^{high}_n = 1 $,
%   the interest switching probability $ p_s = 0.3 $, and
%   the default risk probability $ p_r =0.1 $.
the discount factor $ \alpha = 0.95 $,
the risk-aversion parameter $ \beta  = 1 $,
the maturity period $ M = 3 $,
the total available cash units $ N = 3 $,
the liquid asset interest rate $ r_l = 0.03 $,
the low non-liquid asset interest rate $ r^{low}_n = 0.4 $,
the high non-liquid asset interest rate $ r^{high}_n = 1 $,
%   the default risk rate $ r_d = 0.8 $,
the interest switching probability $ p_s = 0.1 $, and
the default risk probability $ p_r =0.1 $.
We assume that all units of cash are in the liquid asset at $ t=1 $.
For the given parameter setting, we construct an MDP to represent this portfolio management problem.

%'nMatureDate': n, 'nMoneySlides': n, 'nInterestRateLiquid': 0.029069655172315065, 'nInterestRateNonLiquidHigh': 0.9333974869561765, 'nInterestRateNonLiquidLow': 0.4307018460362638, 'nRiskLost': -0.8069518112904548, 'nProbRisk': 0.12983674413296023, 'nProbSwitch': 0.27931851321079226, 'nRiskAversionParam': 0.8841572779482597, 'nDiscount': 0.7897361630863}
%dictPortfolioManagement ={'nMatureDate': n, 'nMoneySlides': n, 'nInterestRateLiquid': 0.03, 'nInterestRateNonLiquidHigh': 0.9, 'nInterestRateNonLiquidLow': 0.4, 'nRiskLost': -0.8, 'nProbRisk': 0.15, 'nProbSwitch': 0.3, 'nRiskAversionParam': 0.9, 'nDiscount': 0.8}

%- local conv, laddering
%- pareto front
%- risk-neutral

\subsection{Local convergence and laddered policy}
We solve the risk-averse discounted mean-variance optimization in the portfolio management with the DMVVI (Algorithm~\ref{DMVVI}) with $ \theta = 10^{-5} $.
As Theorem~\ref{Local convergence} states, the DMVVI converges to a local optimum.
For different initial pseudo means, the value iteration may converge to different local optima.
In this portfolio management problem with the specified setting, the algorithm converges to the global optimum if we initialize the pseudo mean by $ \lambda' = 1 $.
In contrast, it converges to a local optimum if we initialize $ \lambda' = -1$.
The two convergences are compared in Figures \ref{fig:MVConv} and \ref{fig:ObjConv}.
\begin{figure}[!h] 
	\centering
	{\includegraphics[width=0.8\columnwidth]{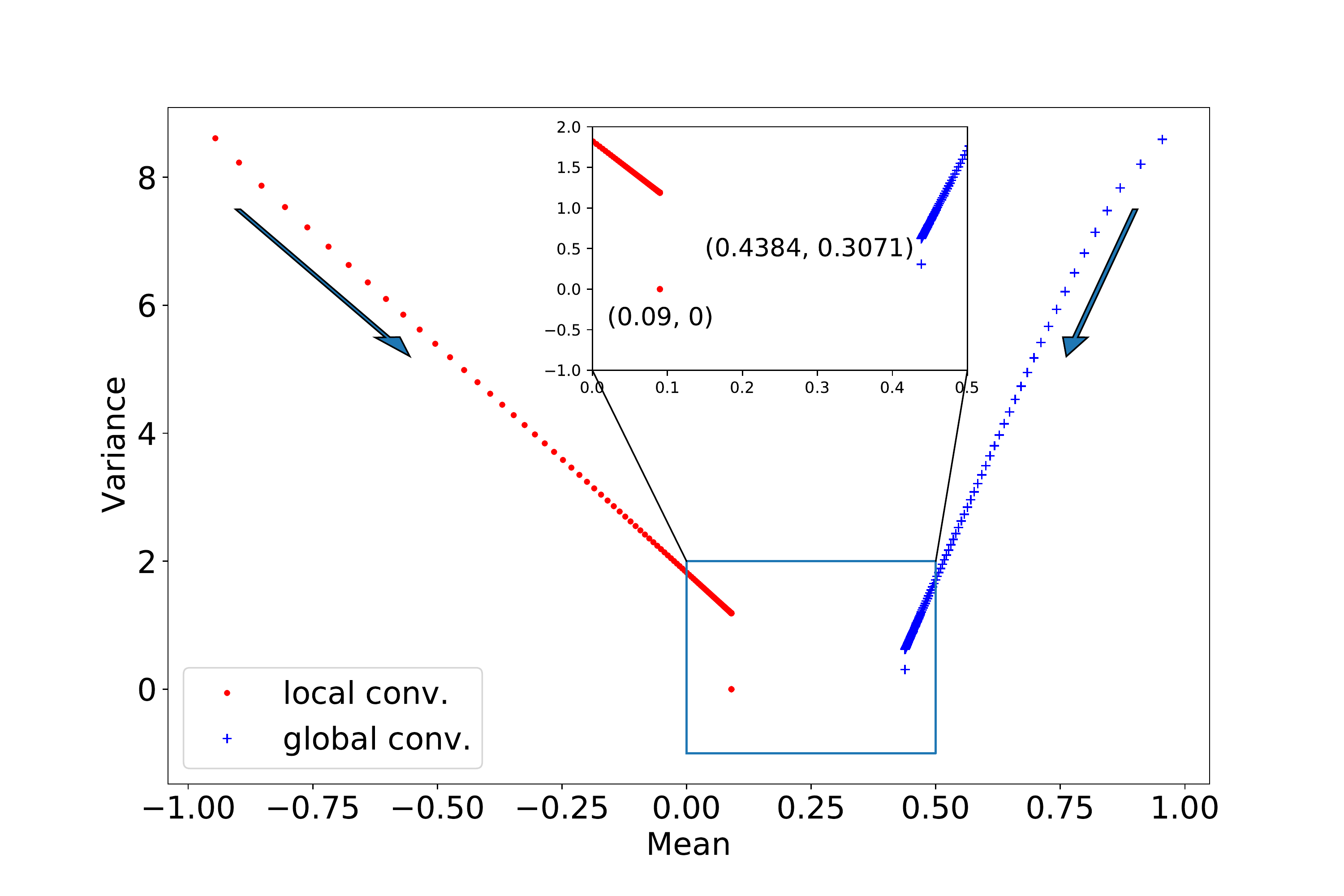}}
	\caption{The local and global convergences in the mean-variance space  with $ \lambda' = -1 $ and $ 1 $, respectively.} \label{fig:MVConv}
\end{figure}
%\begin{figure}[!ht]
%	\centering
%	\caption{The local and global convergences in the mean-variance space  with $ \lambda' = -1 $ and $ 1 $, respectively.
%		The two arrows indicate the converging directions.}
%	%   \vspace{-.5in}
%	\includegraphics[width=0.8\columnwidth]{mv_conv.pdf}     \label{fig:MVConv}
%\end{figure}
%\begin{figure}[h]
%   \centering
%   \includegraphics[scale=0.4]{conv_mv}
%   \caption{The local and global convergences in the mean-variance space.
%       The two arrows indicate the converging directions.}
%   \label{fig:MVConv}
%\end{figure}
\begin{figure}[!h] 
	\centering
	{\includegraphics[width=0.8\columnwidth]{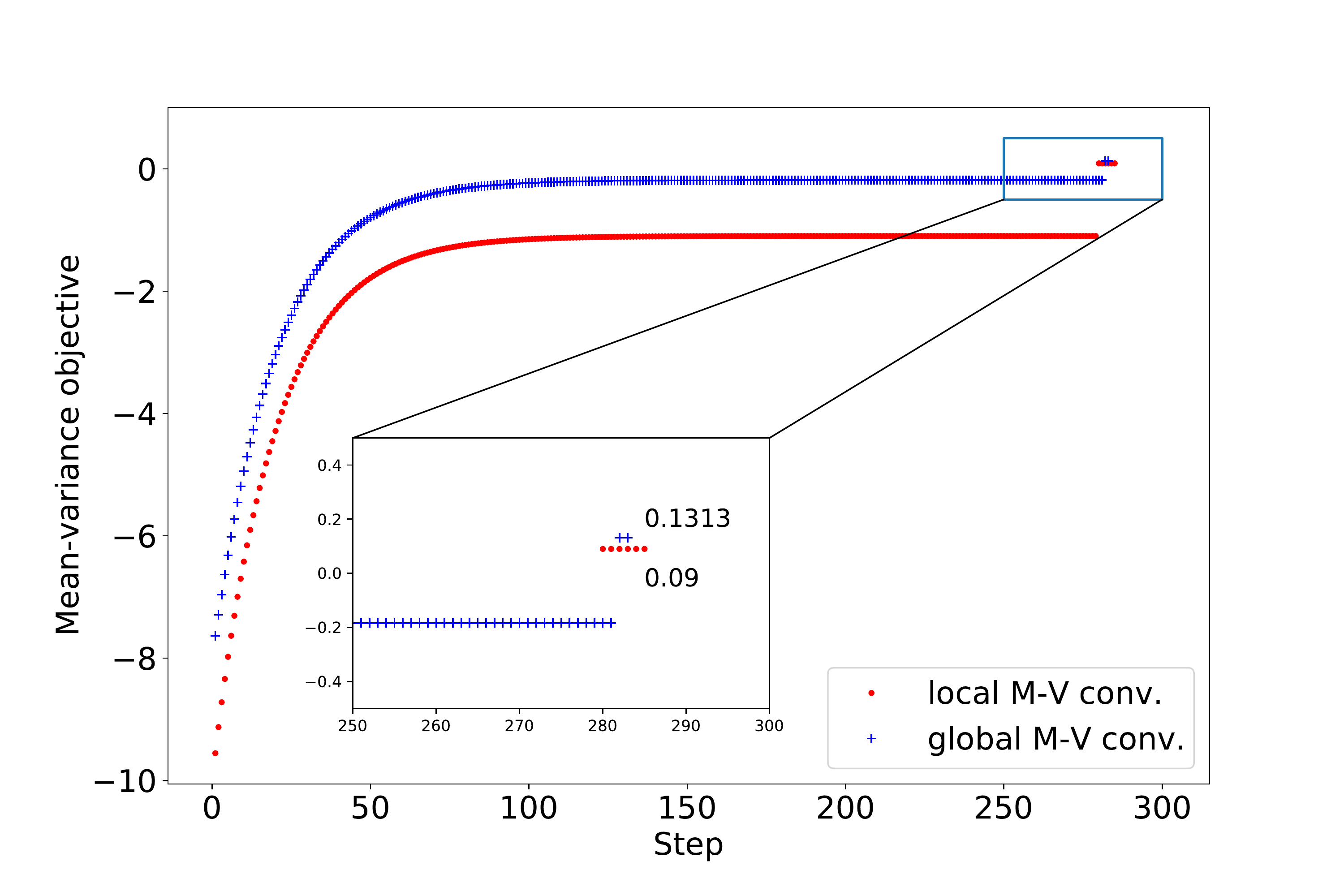}}
	\caption{The local and global convergences along the time steps with $ \lambda' = -1 $ and $ 1 $, respectively.} \label{fig:ObjConv}
\end{figure}

In Figure~\ref{fig:MVConv} we show that when the pseudo mean is initialized differently, the algorithm will converge to different local optima.
In this case, the local optimal policy is $ d(x)=0, x \in S $, which indicates that we should always invest in the liquid risk-free asset, which will deliver a deterministic revenue $ 0.09 $.
Comparing with this conservative local optimal policy, the global optimum achieves a revenue with the discounted mean $ \mu \approx 0.4384 $ and the discounted variance $ \zeta \approx 0.3071 $, which is better with $ \beta  = 1 $.
The discounted mean-variance values are $ 0.1313 $ and $ 0.09 $ for the global and local optima, and the convergences are illustrated in Figure~\ref{fig:ObjConv}.
In both Figures~\ref{fig:MVConv} and \ref{fig:ObjConv}, we can see that there are ``jumps'' near the ends.
That is because the pseudo means are updated in the outer optimization in the bilevel framework.
%\begin{figure}[!ht]
%	\centering
%	\caption{The local and global convergences along the time steps with $ \lambda' = -1 $ and $ 1 $, respectively.}
%	%   \vspace{-.5in}
%	\includegraphics[width=0.8\columnwidth]{obj_conv.pdf}     \label{fig:ObjConv}
%\end{figure}
%\begin{figure}[h]
%   \centering
%   \includegraphics[scale=0.4]{conv_obj}
%   \caption{The local and global convergences along the time steps.}
%   \label{fig:ObjConv}
%\end{figure}

It is worth noting that, the global optimal policy is $ d(x)=0 $ for $ x \in \{ x \in S  \mid  x_0 + x_1 = 0  \} $ and $ d(x) = 1 $ otherwise. %\text{ or } (x_0 + x_1) = 1, x_3 = 2 Generally,
This policy is a laddered (or laddering) strategy, which splits an investment to non-liquid assets into equal units and invests them in regular intervals consecutively in order to maintain a cash flow.
Since investments are spread across several maturities, the laddered strategy also implies temporal diversification which reduces financial risks.
Laddered strategies are widely used in portfolio management~\citep{CALDEIRA2016128}.

\subsection{Mean-variance trade-off along convex efficient frontier}
A mean-variance metric is a combination of two metrics weighted with a risk-aversion parameter $ \beta $.
Different $ \beta $'s reflect different trade-offs between profit ($ \mu $) and risk ($ \zeta $).
By adjusting $ \beta $, different policies represented by ($ \mu, \zeta $)'s can be found with the DMVVI (assuming the global optimality is achieved), and these combinations establish a \textit{convex efficient frontier}, which is the intersection of the mean-variance (Pareto) efficient frontier and the convex hull of the mean-variance pairs.
The convex hull, which is also known as the convex envelope or convex closure, %is a subset of the mean-variance efficient frontier.
%The latter implies a reasonable set of portfolios with different mean-variance combinations.
%Along the efficient frontier, it is impossible to increase mean (decrease variance) while keeping variance (mean) as well off as before.
%% ~\citep{LASSANCE2021}
%The convex hull is a subset of the efficient frontier.
%Mathematically, it 
is the smallest convex set that ``contains'' a given space.
Each policy in the convex efficient frontier corresponds to a global optimal solution to a mean-variance optimization with a specific $ \beta $. % con can be concerned as the smallest convex ``bowl'' holding all the possible mean-variance pairs from the lower right in the figure.
The convex efficient frontier of the discounted mean-variance problem along with some possible mean-variance pairs is shown in Figure~\ref{fig:PF}. %, where $ \beta \in [0,2] $ is evenly discretized to 101 values.
In this case, the convex efficient frontier is composed of five vertices, and every vertex represents a possible $ (\mu, \zeta) $ under an optimal policy with respect to some risk-aversion parameter.
\begin{figure}[!h] 
	\centering
	{\includegraphics[width=0.7\columnwidth]{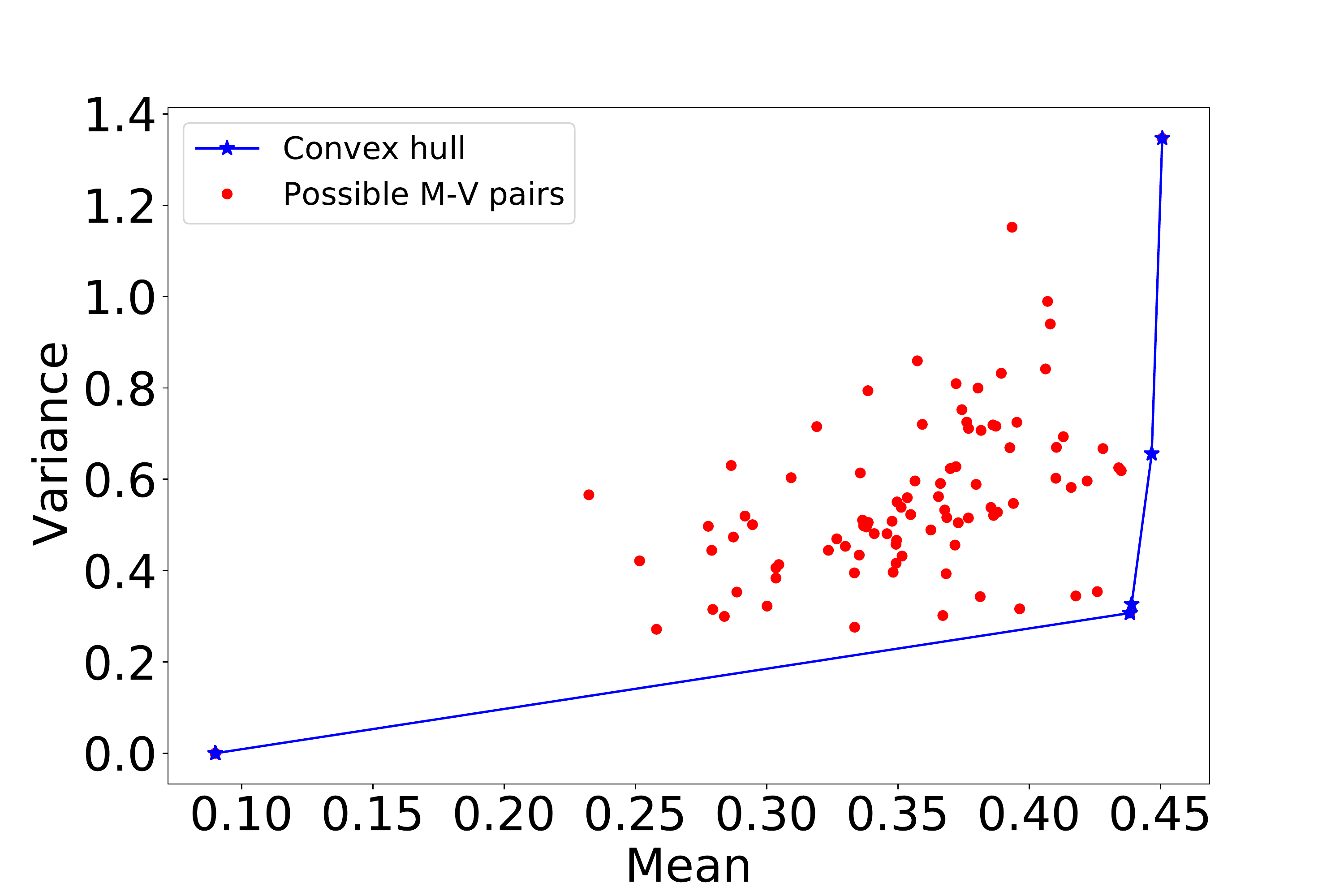}}
	\caption{The convex hull of the risk-averse discounted mean-variance optimization.} \label{fig:PF}
\end{figure}

\subsection{Risk-aversion versus risk-neutral}
Another concern could be the difference between the results of risk-averse and risk-neutral optimizations---``how much profit is sacrificed'' and ``how much risk is eliminated'' are crucial questions.
To quantify the difference, we compare the mean-variance pairs under the optimal policies for the cases with $ \beta = 0 $ and $ \beta = 1 $.
The comparison is shown in Figure~\ref{fig:dist}, with the Gaussian distribution used for presentation only.
%   The numbers in the parentheses indicate the corresponding means and volatilities, respectively.
The mean and variance for the risk-neutral case is $ (0.4507, 1.3468) $, while its risk-averse counterpart is $ (0.4384, 0.3071) $.
We can see that by sacrificing $ (0.4507 - 0.4384) =0.0123 $ ($ 0.0123/0.4507 \times 100\% \approx 2.73\% $)
in the expected profit, we reduce the variance from 1.3468 to 0.3071 ($ 0.3071/1.3468 \times 100\% \approx 22.80\% $).
It means that in this case, we sacrifice $ 2.73\% $ of the expected profit to eliminate $ (1 - 22.80\%) = 77.20\% $ of the risk, which could be a meaningful risk-averse decision.
\begin{figure}[!h] 
	\centering
	{\includegraphics[width=0.7\columnwidth]{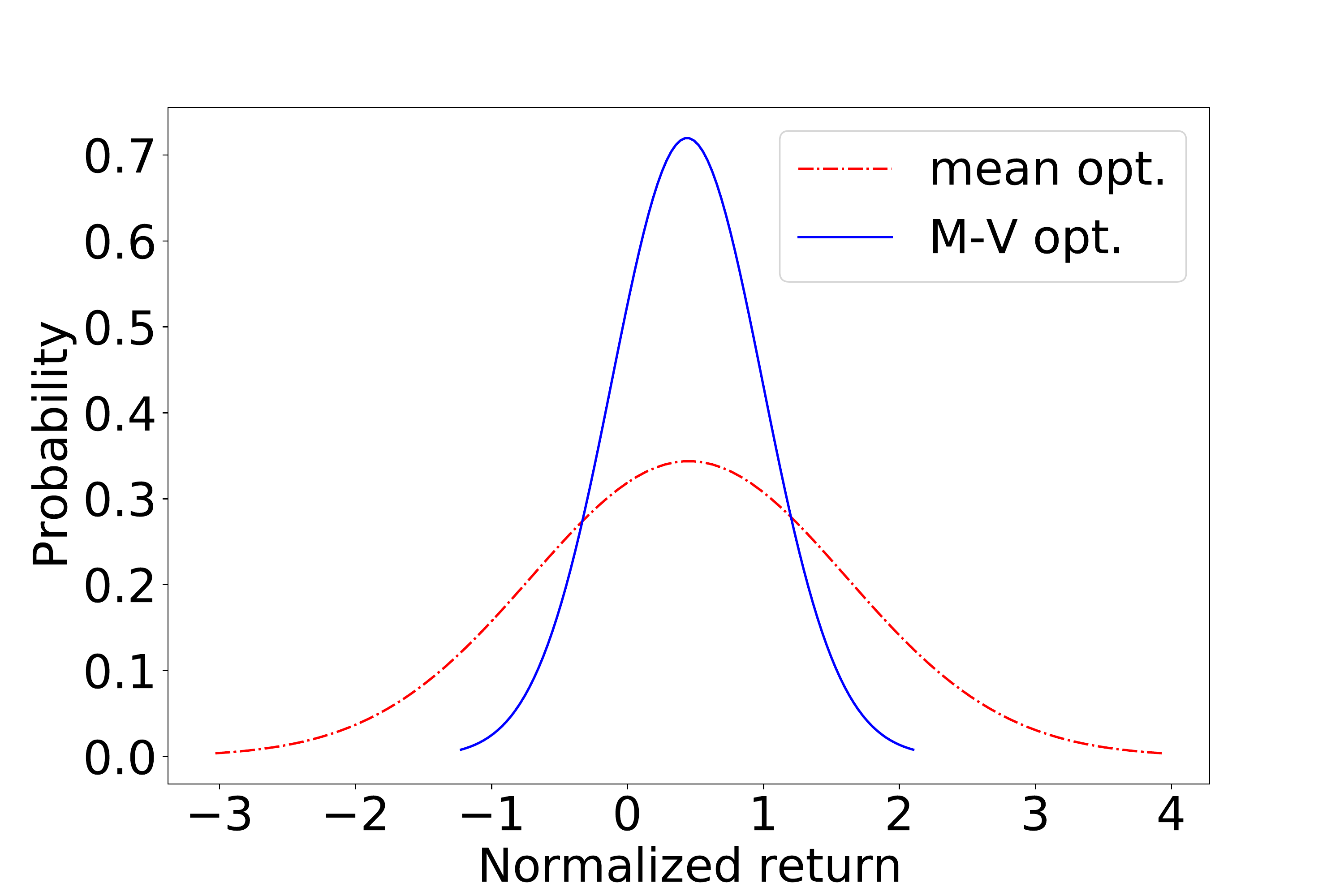}}
	\caption{A comparison between the means and variances with $ \beta = 0 $ and $ \beta = 1 $.} \label{fig:dist}
\end{figure}
%\begin{figure}[!ht]
%	\centering
%	\caption{A comparison between the means and variances with $ \beta = 0 $ and $ \beta = 1 $.}
%	%   \vspace{-.5in}
%	\includegraphics[width=0.7\columnwidth]{normal.pdf}       \label{fig:dist}
%\end{figure}

\section{Summary and outlook}

In this paper, we study the risk-averse discounted mean-variance optimization, in which the concerned variance refers to the steady-state variance formulated with a discount factor.
The problem formulation unifies the (mean-)variance optimizations in discounted and average MDPs.
%   Moreover, if the mean in the objective function is omitted, the problem degrades to the long-run variance minimization problem~\citep{XIA2016}.
Since the reward functions are policy-dependent in the variance-related problems, the MDPs are unorthodox, and the traditional DP methods cannot be applied directly.
We proposed a unified algorithm framework to solve and analyze this problem. %as well as some other (steady-state) variance-related problems.
This framework has a bilevel optimization structure, where the inner problem refers to a standard MDP, and the outer problem concerns a one-dimensional optimization.
For the concerned mean-variance optimization, the outer problem has a closed-form solution, i.e., the discounted mean with respect to a given policy determined by the inner optimization.
This framework unifies a series of algorithms for several variance-related optimizations in discounted and average MDPs, such as the policy gradients~\citep{Prashanth2013NIPS,Bisi2020ijcai}, the policy iterations~\citep{XIA2016,Xia2020,Zhang2021}, and the value iteration~\citep{Gosavi2014}. Furthermore, convergence analyses can be developed with the aid of the Bellman local-optimality equation.
For the risk-averse mean-variance optimization in discounted MDPs, we take value iteration as an example and develop the DMVVI algorithm.
A numerical experiment on a portfolio management problem is given to validate the proposed DMVVI.

%Based on the proposed framework, a variety of algorithms in the related literature can be generated.
%The algorithm, which applies a standard policy iteration on the inner problem, was studied in\citep{Xia2020,XIA2016} where the local convergence and other related properties were proved from the sensitivity viewpoint.

Possibilities for future work include studies on the conditions of global convergence, i.e., when an algorithm for a variance-related optimization can converge to a global optimum with probability one.
A first attempt could be a stochastic global convergence achieved with the exploratory mechanism in RL.
%   \cite{Zhong2020} propose an actor-critic algorithm with a multi-layer overparameterized neural network for the mean optimization constrained on the steady-state variance.
%   In each iteration, the policy, the Lagrange multiplier, and the Fenchel dual variable are updated.
%   The authors prove that the algorithm guarantees a globally optimal policy with a lower bounded probability.
The other future work could be model-free RL algorithms, such as Q-learning and SARSA, as online solutions to risk-averse variance-related problems.
We believe that the proposed algorithm framework and one of its consequent algorithms, the DMVVI, provide a theoretic foundation and inspiration for future works.

\end{document}